\newcommand{\reg}{{\operatorname{reg}}}
\newcommand{\IC}{\operatorname{IC}}
\newcommand{\bC}{\mathbf{C}}
\newcommand{\bA}{\mathbf{A}}
\newcommand{\Spec}{\operatorname{\mathsf{Spec}}}
\newcommand{\HP}{\operatorname{HP}}
\newcommand{\HH}{\operatorname{HH}}
\newcommand{\cO}{\mathcal{O}}
\newcommand{\cN}{\mathcal{N}}
\newcommand{\cB}{\mathcal{B}}
\newcommand{\bZ}{\mathbf{Z}}
\newcommand{\caD}{\mathcal{D}}
\newcommand{\iso}{\displaystyle \mathop{\to}^\sim}
\newcommand{\Lie}{\operatorname{Lie}}
\newcommand{\mfsl}{\mathfrak{sl}}
\newcommand{\mfgl}{\mathfrak{gl}}
\newcommand{\mfg}{\mathfrak{g}}
\newcommand{\mfh}{\mathfrak{h}}
\newcommand{\SL}{\operatorname{\mathsf{SL}}}
\newcommand{\Sym}{\operatorname{Sym}}
\newcommand{\Hom}{\operatorname{Hom}}
\newcommand{\Id}{\operatorname{Id}}
\newcommand{\rk}{\operatorname{rk}}
\newcommand{\pr}{\operatorname{pr}}
\newcommand{\Stab}{\operatorname{Stab}}
\newcommand{\Eu}{\operatorname{Eu}}
\newcommand{\gr}{\operatorname{gr}}
\newcommand{\ad}{\operatorname{ad}}
\newcommand{\into}{\hookrightarrow}
\newcommand{\ih}{\operatorname{ih}}
\newcommand{\IH}{\operatorname{IH}}
\newcommand{\Irrep}{\operatorname{Irrep}}
   \def\MR#1{}
\begin{document}
\author{Gwyn Bellamy 
and Travis Schedler}
\institute{Gwyn Bellamy
\at School of Mathematics and Statistics, University of Glasgow, University Gardens, Glasgow G12 8QW, United Kingdom, \\\email{gwyn.bellamy@glasgow.ac.uk}
\and
Travis Schedler
\at 
Department of Mathematics, Imperial College London, South Kensington Campus, London SW7 2AZ, United Kingdom, \\\email{trasched@gmail.com}
}

\title{Filtrations on Springer fiber cohomology and Kostka polynomials}
\date{\today}

\maketitle

\begin{abstract}
  We prove a conjecture which expresses the bigraded Poisson-de Rham
  homology of the nilpotent cone of a semisimple Lie algebra in terms
  of the generalized (one-variable) Kostka polynomials, via a formula
  suggested by Lusztig.  This allows us to construct a canonical
  family of filtrations on the flag variety cohomology, and hence on
  irreducible representations of the Weyl group, whose Hilbert series
  are given by the generalized Kostka polynomials.  We deduce
  consequences for the cohomology of all Springer fibers.  In
  particular, this computes the grading on the zeroth Poisson homology
  of all classical finite W-algebras, as well as the filtration on the
  zeroth Hochschild homology of all quantum finite W-algebras, and we
  generalize to all homology degrees.  As a consequence, we deduce a
  conjecture of Proudfoot on symplectic duality, relating in type A
  the Poisson homology of Slodowy slices to the intersection
  cohomology of nilpotent orbit closures. In the last section, we give an
  analogue of our main theorem in the setting of mirabolic D-modules.
\end{abstract}

\subclass{17B63, 14F10, 14M15} 
 \keywords{equivariant D-modules, Kostka polynomials, Poisson-de Rham
 homology, W-algebras, Springer fibers, nilpotent cone,
 Harish-Chandra homomorphism, Grothendieck--Springer resolution}




\section{Introduction}
Let $\mfg$ be a semisimple complex Lie algebra, $\cN \subseteq \mfg^*$
the nilpotent cone (of elements whose coadjoint orbit is stable under
dilations), $W$ the Weyl group, and $G$ a simply-connected connected
complex Lie group with $\Lie G = \mfg$. The Springer correspondence
associates to every irreducible representation $\chi$ of $W$ a pair of
a nilpotent coadjoint orbit $\cO_\chi \subseteq \mfg^*$ and a local
system $L_\chi$ on $\cO_\chi$.  Let $\mathcal{B}$ be the flag variety
and $\rho: T^* \mathcal{B} \to \cN$ the Springer resolution.  Then the
cohomology of $T^* \mathcal{B}$, or equivalently of $\mathcal{B}$, is
endowed by the Springer correspondence with a $W$-action.  The graded
multiplicity space of each irreducible representation $\chi$ of $W$
has Hilbert series given by the generalized Kostka polynomial
$K_{\mfg,\chi}(t)$, which in the case of $\mfg=\mfsl_n$ is an ordinary
one-variable Kostka polynomial.  Precisely, we set
$K_{\mathfrak{g},\chi}(t) := \sum_{i \geq 0} t^i\dim \Hom_W(\chi,
H^{2\dim\cB - 2i}(\cB, \bC))$, where $\dim$ always refers to the
complex dimension.  Note that, as a graded $W$-module,
$H^*(\mathcal{B},\bC) \cong \Sym \mfh/ (\Sym \mfh)_+^W)$, 
putting $\mfh$
in degree two, with $((\Sym \mfh)_+^W)$
the ideal generated 
by the positive-degree $W$-invariant elements of $\Sym \mfh$.

By a theorem of \cite{ESwalg}, which was conjectured in \cite[Conjecture 1.21.(b)]{hp0weyl}
to generalize to arbitrary symplectic resolutions,
$H^*(T^*\mathcal{B}) \cong \HP_{\dim \cN-*}^{DR}(\cN)$, where the
latter is the Poisson-de Rham homology of $\cN$, defined in
\cite{ESdm}.  Briefly, the Poisson-de Rham homology of a Poisson
variety $Y$ is defined as the derived pushforward $\HP_i^{DR}(Y) :=
H^{-i} \pi_* M(Y)$ of the 
$\caD$-module $M(Y)$ on $Y$ to a
point, where $M(Y)$ is defined by the property $\Hom(M(Y), N) =
\Gamma_{\caD}(Y,N)^{H(Y)}$ for $H(Y)$ the Lie algebra of Hamiltonian vector fields on
$Y$, $N$ an arbitrary $\caD$-module on $Y$ (in the sense of
Kashiwara), and $\Gamma_{\caD}(Y,N)$ the global sections of $N$:
see Remark \ref{r:my-defn} below or \cite{ESdm,ES-survey} for
details. In the case of the nilpotent cone, the Poisson-de Rham
homology does not see the $W$-action, since $W$ does not act on $\cN$,
unlike on the cohomology of $T^*\mathcal{B}$.  On the other hand,
$\cN$ has a dilation action which endows $\HP_*^{DR}(\cN)$ with a
second grading, which is not seen in $H^*(T^*\mathcal{B})$.  It is
interesting to compute this grading. Moreover, this difference makes
it clear that the isomorphism of \cite{ESdm} cannot be canonical, and
it is interesting to correct this deficiency.

Lusztig suggested a simple formula for this bigrading
(\cite[Conjecture 8.1]{PS-pdrhhvnc}):
\begin{equation}\label{e:lusztig}
h(\HP_*^{DR}(\cN);x,y)=\sum_{\chi \in \Irrep(W)} K_{\mfg,\chi}(x^2)
  K_{\mfg,\chi}(y^{-2}).
\end{equation}
In this paper we prove this conjecture, in the following stronger form,
as a simple application of a theorem of Hotta and Kashiwara. Let $\sigma$ denote the sign representation of $W$.
\begin{theorem} \label{t:can-iso} There is a canonical isomorphism of
  bigraded vector spaces
 \[\HP_*^{DR}(\cN) \cong \Hom_W(\Sym \mfh/(\Sym \mfh)_+^W) \otimes \sigma, H^{2 \dim
    \cB-*}(T^*\mathcal{B})).
\]
\end{theorem}
Here the weight grading on the LHS corresponds to the grading on
$\Sym \mfh /((\Sym \mfh)^W_+)$ on the RHS (with $\mfh$ in degree two), and
the second grading is by the asterisk $*$. 
This isomorphism
accomplishes our goal of producing a canonical isomorphism. 
\begin{remark} Using the homotopy equivalence $T^*\mathcal{B} \simeq \mathcal{B}$ together with Poincar\'e duality for $\mathcal{B}$, we can rewrite the
theorem more simply as $\HP_*^{DR}(\cN) \cong \Hom_W(\Sym \mfh/((\Sym \mfh)_+^W), H^{*}(\mathcal{B}))$, but the way it is written is more natural; for
example, the aforementioned general conjecture  states
$\HP_*^{DR}(X) \cong H^{\dim X - *}(\widetilde X)$ for symplectic resolutions $\widetilde X \to X$.
\end{remark} 
 We go
further and produce canonical \emph{filtrations} on the cohomology of
the flag variety whose Hilbert series is given in \eqref{e:lusztig}:
\begin{theorem}\label{t:can-filt}
  For every element $\lambda \in \mfh_\reg^*$, there is a
  canonical associated filtration $\mathcal{F}_\lambda$ on
  $H^{2 \dim \cB-*}(T^*\mathcal{B})$ whose associated graded vector space is
  $\HP_*^{DR}(\cN)$. This is $W$-equivariant:
  $\mathcal{F}_{w(\lambda)} = w(\mathcal{F}_\lambda)$.
\end{theorem}
The filtration is compatible with the cohomological grading, hence the
associated graded vector space is bigraded.  As a result we obtain
canonical filtrations on irreducible representations of Weyl
groups. 
\begin{corollary}\label{c:irr-filt}
 To every $\lambda \in \mfh_\reg^*$, there is associated
a canonical filtration on every irreducible representation $\chi$ of $W$
whose associated graded vector space has Hilbert series $K_{\mfg,\chi}(y^{-2})$.
\end{corollary}
This recovers in particular the noncanonical
isomorphism predicted in \cite[Conjecture 8.1]{PS-pdrhhvnc}. 
\begin{example}
  Let $\mfg=\mfsl_n$ and let
  $\chi = \mfh^*  \cong \bC^{n-1}$ be the (dual)
  reflection representation.
  Consider $\chi$ to be (in coordinates)
  $\bC^n / \bC \cdot (1,1,\ldots,1)$.  Let $\lambda \in \chi$ be the
  image of $(a_1, \ldots, a_n) \in \bC^n$.  Then the resulting
  filtration on $\chi$, which we call the Vandermonde filtration, is
  given as follows: for every $0 \leq i \leq n$,
  $F^{2i-2\dim \mathcal{B}}(\chi)$ is the span of
  $(a_1^j, \ldots, a_n^j)$ for $1 \leq j \leq i$.
\end{example}
As we observe, the construction of Corollary \ref{c:irr-filt} actually
generalizes from Weyl groups to arbitrary complex reflection groups.
We will study the resulting filtrations in detail in future work.

We deduce many consequences and extensions of the above results to
Slodowy slices, $W$-algebras, and Springer fibers.  In more detail, let
$\phi \in \cN$ be any point.  Then one can consider the Slodowy slice
$S_\phi \cap \cN$ in $\cN$ to the coadjoint orbit
$\mathcal{O}_\phi := G \cdot \phi \subseteq \cN$.  (We recall its
construction in \S \ref{s:spr-fiber} below).  The ring of functions
$\cO(S_\phi \cap \cN)$ is also called a (centrally reduced) classical
W-algebra.
The above results allow us to deduce the grading on the zeroth Poisson
homology,
$$
\HP_0(\cO(S_\phi \cap \cN)):=\cO(S_\phi \cap \cN)/\{\cO(S_\phi \cap \cN),\cO(S_\phi \cap \cN)\} =
\HP_0^{DR}(S_\phi \cap \cN),
$$
%
as well as the filtration on the quantizations of $S_\phi \cap \cN$, which are
(centrally reduced) quantum W-algebras.  Geometrically, these
naturally assign to the top cohomology of each Springer fiber
$\rho^{-1}(\phi)$ a $\mfh_\reg^*$-family of filtrations whose
Hilbert series we compute.

As a consequence, when $\mfg=\mfsl_n$, and hence $Y=S_\phi \cap \mathcal{N}$ is
\emph{symplectically dual} to a corresponding coadjoint orbit
$Y^!  \subseteq \cN$, we prove a case of a conjecture of Proudfoot,
which states (for general symplectic dual cones $Y$ and $Y^!$) that
$\HP_0(\cO(Y)) \cong \IH^*(Y^!)$ as graded vector spaces, with
$\IH^*(Y^!)$ the intersection cohomology of $Y^!$.

We also give formulas for the higher Poisson-de Rham homology of
Slodowy slices, and for the zeroth Hochschild homology of their quantizations
by finite $W$-algebras.

\begin{remark}
  Note that \eqref{e:lusztig} implies that the weight grading on
  $\HP^{DR}_*(\mathcal{N})$ is nonpositive. This is somewhat unusual;
  for example whenever the zeroth Poisson homology of a conical
  Poisson variety is at least two-dimensional it will have (some)
  positive weights, as will happen for many Slodowy slices in
  the nilpotent cone, cf.~Corollary \ref{c:mc1}.  Note that, for
  varieties admitting a symplectic resolution for which
  \cite[Conjecture 1.3.(b)]{hp0weyl} holds, this condition that the
  zeroth Poisson homology has dimension at least two is equivalent to
  the statement that the fiber over the vertex in the symplectic
  resolution has multiple Lagrangian components (in particular, it is
  not irreducible).
\end{remark}


The proofs, given in \S \ref{s:dmod}, involve a study of the
Harish-Chandra $\caD$-module on $\cN$, following Hotta and Kashiwara
in \cite{HKihs}.  In Section \ref{ss:mirabolic}, we give a
generalization of our main result to the setting of mirabolic
$\caD$-modules, which computes the weakly equivariant structure of the
mirabolic Harish-Chandra $\caD$-module on $\mfgl_n \times \bC^n$,
defined in \cite{MirabolicCharacter}.

We begin the body of the paper in Section \ref{s:spr-fiber} with a
detailed statement of our results on the grading associated to the
cohomology of Springer fibers as well as to the Poisson and Hochschild
homology of W-algebras. The application to Proudfoot's conjecture on
symplectic duality is then given in Section \ref{s:proudfoot}. In the
remaining sections we prove our results using $\caD$-modules,
recalling first some of the necessary background.  In the last section, we
explain an alternative proof of Lusztig's formula using Hamiltonian
reduction. We then use this to generalize this result
 to the mirabolic setting, i.e.,
the setting of $\SL_n$-equivariant $\caD$-modules on
$\mathfrak{sl}_n \times \bC^n$.


\subsection{Acknowledgements}
The first author was partially supported by EPSRC grant EP/N005058/1.
The second author was partially supported by NSF grant DMS-1406553.
We would like to thank Pavel Etingof for useful discussions, and his
permission to use the results on filtrations (a special case of joint
work with the second author). The authors are grateful to the
University of Glasgow for the hospitality provided during the workshop
``Symplectic representation theory'', where part of this work was
done, and the second author to the 2015 Park City Mathematics
Institute and to the Max Planck Institute for Mathematics in
Bonn for their excellent working environments.

\section{Springer fibers and $W$-algebras}\label{s:spr-fiber}
Let $\phi \in \cN$.  We may then consider the Springer fiber
$\rho^{-1}(\phi) \subseteq T^* \cB$, which reduces to $\cB$ itself in
the case $\phi=0$.  

There is a beautiful construction of a transverse slice, called the
Kostant--Slodowy slice, which we denote $S_\phi$, to $\cO_\phi$ in
$\mfg^*$, which is an affine linear space defined as follows: Let
$\langle-,-\rangle$ be a non-degenerate invariant bilinear form on
$\mfg$ (e.g., the Killing form) and $\Phi: \mfg \to \mfg^*$ be the
resulting isomorphism.  Then $e:=\Phi^{-1}(\phi)$ is ad-nilpotent.
The Jacobson--Morozov theorem states that the element $e$ can be
extended (nonuniquely) to a so-called $\mathfrak{sl}_2$ triple
$(e,h,f)$ of elements of $\mfg$ satisfying the relations $[h,e]=2e,
[h,f]=-2f$, and $[e,f]=h$.  Then $S_\phi$ can be defined as $\Phi(e +
\ker(\ad f))$.  Moreover, Kazhdan defined a canonical contracting
$\bC^\times$ action on $S_\phi$ to $\phi$, by $\lambda \cdot \phi =
\lambda^{2-\ad(h)^*}(\phi)$; the induced grading on $\cO(S_\phi)$ is
called the Kazhdan grading.  Using this action, the resolution $\rho$
restricts to a $\bC^\times$-equivariant symplectic resolution $\rho:
\rho^{-1}(S_\phi \cap \cN) \to S_\phi \cap \cN$, which topologically
contracts to the fixed locus $\rho^{-1}(\phi) \to \{\phi\}$. Then, as
before, \cite[Conjecture 1.3.(b)]{hp0weyl} states in this case that
$H^*(\rho^{-1}(S_\phi \cap \cN)) \cong \HP^{DR}_{\dim S_\phi \cap \cN
  - *}(S_\phi \cap \cN)$, which was proved in \cite[Theorem
1.13]{ESwalg}.  Since the Poisson-de Rham homology is bigraded by
homological and Kazhdan gradings, this yields a bigrading on the
cohomology of the Springer fiber $H^*(\rho^{-1}(\phi))$, which was not
studied in \cite{ESwalg}. Our goal is to compute this grading.

For now, we describe the grading in top degree, $H^{\dim \rho^{-1}(\phi)}(\rho^{-1}(\phi))$
(see Corollary \ref{c:mc1} below for the general formula in terms of
intersection cohomology).  This has an explicit algebraic
interpretation in terms of $W$-algebras.  Namely, the finite
$W$-algebra $W_\phi$ is the coordinate ring of $S_\phi$, and its
central reduction $W_\phi^0$ is the coordinate ring of $S_\phi \cap
\cN$.  Let us recall their explicit algebraic description, along with
the Poisson structure, following \cite{GaGiqss} and \cite{Losfwa}.

Since $\ad(h)$ is semisimple, we get a decomposition $\mfg =
\bigoplus_i \mfg_i$ where $\mfg_i$ is the eigenspace of $\ad(h)$ of
eigenvalue $i$.  Equip $\mfg$ with the skew-symmetric form
$\omega_\phi(x,y) := \phi([x,y]) = \langle e, [x, y] \rangle$. This
restricts to a nondegenerate pairing on $\mfg_{-1}$.  Fix a Lagrangian
subspace $\mathfrak{l} \subseteq \mfg_{-1}$.  Then we define
\begin{equation}
  \mathfrak{m}_\phi := \mathfrak{l} \oplus \bigoplus_{i \leq -2} \mathfrak{g}_i.
\end{equation}
We also define the shift 
\begin{equation}
  \mathfrak{m}_\phi' := \{x-\phi(x) \mid x \in \mathfrak{m}_\phi\} 
  \subseteq \Sym \mfg.
\end{equation}
The $W$-algebra $W_\phi$ is then defined by
\begin{equation}
  W_\phi = (\Sym \mfg / \mathfrak{m}_\phi' \Sym \mfg)^{\mathfrak{m}_\phi}.
\end{equation}
In other words, this is the Hamiltonian reduction of $\mfg^*$ with
respect to the Lie algebra $\mathfrak{m}_\phi$ and its character
$\phi$.  By construction, $W_\phi$ is a Poisson algebra (with respect to
the Poisson bracket induced from $\Sym \mfg$).  In more detail, if $x
+\mathfrak{m}_\phi' \Sym \mfg \in W_\phi$, then $\{x,
\mathfrak{m}_\phi\} \subseteq \mathfrak{m}_\phi' \Sym \mfg$, and hence
$\{x, \mathfrak{m}_\phi' \Sym \mfg\} \subseteq \mathfrak{m}_\phi'
\Sym\mfg$.  Thus the Poisson bracket on $\Sym \mfg$ induces one on
$W_\phi$.

Given any central character $\eta: Z(\Sym \mfg) = (\Sym \mfg)^{\mfg}
\to \bC$, we can form the central reduction $W_\phi^\eta := W_\phi /
\ker(\eta) W_\phi$. Here $Z(\Sym \mfg)$ denotes the Poisson center of
$\Sym \mfg$. Then, $W_\phi$ and $W_\phi^0$ are the coordinate rings of
the Kostant--Slodowy slices:
\[
W_\phi = \cO(S_\phi), \quad W_\phi^0 = \cO(S_\phi \cap \cN).
\]
Note that, for general $\eta$, $\Spec W_\phi^\eta$ is a deformation of
$S_\phi \cap \cN$, namely the Kostant--Slodowy slice $S_\phi$
intersected with the closure of the regular coadjoint orbit on which
every $f \in (\Sym \mfg)^\mfg$ restricts to the constant function
$\eta(f)$.  Moreover $W_\phi^\eta$ is a filtered algebra with $\gr
W_\phi^{\eta} = W_\phi^0$.

Recall that the \emph{zeroth Poisson homology} of a Poisson algebra
$A$ is $\HP_0(A) := A/\{A,A\}$.  Then, as a corollary of our main
theorem, we compute the graded structure of the zeroth Poisson
homology of $W_\phi$ and $W_\phi^0$, as well as the filtered structure
of $W_\phi^\eta$. We will need to use the Springer correspondence,
which assigns (injectively) to each irreducible representation $\chi
\in \Irrep(W)$ the pair of a nilpotent coadjoint orbit $\cO_\chi
\subseteq \cN$ and an irreducible local system $L_\chi$ on $\cO_\chi$
(see the beginning of Section \ref{s:dmod} below for an explicit
definition of $(\cO_\chi,L_\chi)$).  Define the following polynomial
(cf.~\cite[(8.2)]{PS-pdrhhvnc}):
\begin{equation}
  P_{\phi}(y) :=  y^{\dim \cO_{\phi}}
  \sum_{\chi \in \Irrep(W) \mid \cO_\chi=\cO_\phi}
  \rk  L_\chi \cdot K_{\mathfrak{g},\chi}(y^{-2}).
\end{equation}
\begin{corollary}\label{c:hilb-hp0} The Hilbert series of
  $\HP_0(W^0_\phi)$, as well as of $\gr \HP_0(W^\eta_\phi)$ for all
  $\eta$, is $P_\phi(y)$.  Moreover, $\HP_0(W_\phi)$ is freely generated over
$(\Sym \mfg)^\mfg$ generated by a graded vector space of
  the same Hilbert series.
\end{corollary}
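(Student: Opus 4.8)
The plan is to derive Corollary~\ref{c:hilb-hp0} from the $\caD$-module form of Theorem~\ref{t:mt}, namely the decomposition $M(\cN)\cong\bigoplus_{\chi\in\Irrep(W)}\IC(\overline{\cO_\chi},L_\chi)\otimes\mathcal{K}_\chi$ of the Poisson--de Rham $\caD$-module of $\cN$ into intersection-cohomology $\caD$-modules, with weight-graded multiplicity spaces $\mathcal{K}_\chi$ of Hilbert series $h(\mathcal{K}_\chi;y)=K_{\mfg,\chi}(y^{-2})$ (so that pushing to a point and using $\IH^*(\overline{\cO_\chi},L_\chi)=V_\chi$ recovers Theorem~\ref{t:lusztig-conj}). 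First I would rewrite the left-hand sides in Poisson--de Rham terms: since $W_\phi^0=\cO(S_\phi\cap\cN)$ and $\HP_0$ of an affine Poisson algebra $A$ equals $H^0\pi_*M(\Spec A)$, we get $\HP_0(W_\phi^0)=\HP_0^{DR}(S_\phi\cap\cN)$ as Kazhdan-weight-graded spaces, and likewise $\gr\HP_0(W_\phi^\eta)$ and $\HP_0(W_\phi)$ are governed by $\HP_0^{DR}$ of the deformation $\Spec W_\phi^\eta$ and of the total space $S_\phi\to\Spec(\Sym\mfg)^\mfg$. (For orientation: by \cite[Theorem~1.13]{ESwalg} and the contraction of $\rho^{-1}(S_\phi\cap\cN)$ onto $\rho^{-1}(\phi)$, $\HP_0^{DR}(S_\phi\cap\cN)\cong H^{\dim(S_\phi\cap\cN)}(\rho^{-1}(\phi))$ is the top cohomology of the Springer fiber, so the corollary is computing its Kazhdan grading.)

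The geometric crux is that $S_\phi$ is a transverse slice in $\mfg^*$ to the symplectic leaf $\cO_\phi\subseteq\cN$; hence $S_\phi\cap\overline{\cO_\chi}$ is a transverse slice to $\cO_\phi$ inside $\overline{\cO_\chi}$ whenever $\cO_\phi\subseteq\overline{\cO_\chi}$, and since the Kazhdan $\bC^\times$ lies inside $G\times\bC^\times_{\mathrm{dil}}$ it preserves every $\overline{\cO_\chi}$, so this slice is conical, reducing to the single fixed point $\phi$ exactly when $\cO_\chi=\cO_\phi$. As $M(-)$ is \'etale-local and compatible with transverse slices to symplectic leaves (cf.\ \cite{ESdm,ESwalg}), restriction along $S_\phi\cap\cN\hookrightarrow\cN$ sends $M(\cN)$ to $M(S_\phi\cap\cN)$ and each summand $\IC(\overline{\cO_\chi},L_\chi)$ to $\IC(\overline{\cO_\chi}\cap S_\phi,L_\chi)$ (restricted local system) up to a homological shift, vanishing unless $\phi\in\overline{\cO_\chi}$; the two weight gradings differ by a shift of $\dim\cO_\phi$ coming from comparing the dilation grading on $\cN$ with the Kazhdan grading $\lambda\cdot\xi=\lambda^{2-\ad(h)^*}\xi$ on $S_\phi$ --- exactly the normalization in \cite[(8.2)]{PS-pdrhhvnc}. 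Thus $M(S_\phi\cap\cN)\cong\bigoplus_{\chi\,:\,\cO_\phi\subseteq\overline{\cO_\chi}}\IC(\overline{\cO_\chi}\cap S_\phi,L_\chi)\otimes\mathcal{K}_\chi$ with these shifts.

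Applying $H^0\pi_*$ then isolates a single stratum: for a conical affine variety $Z$ one has $\IH^k(Z,-)=0$ for $k\geq\dim Z$, so $H^0\pi_*\IC(\overline{\cO_\chi}\cap S_\phi,L_\chi)=0$ whenever $\overline{\cO_\chi}\cap S_\phi$ is positive-dimensional, i.e.\ whenever $\cO_\chi\supsetneq\cO_\phi$, while for $\cO_\chi=\cO_\phi$ we get $\IC(\{\phi\},L_\chi)=\delta_\phi^{\oplus\dim L_\chi}$ with $H^0\pi_*\delta_\phi=\bC$ in one weight. Hence $\HP_0(W_\phi^0)=\HP_0^{DR}(S_\phi\cap\cN)\cong\bigoplus_{\chi\,:\,\cO_\chi=\cO_\phi}\bC^{\dim L_\chi}\otimes\mathcal{K}_\chi[\dim\cO_\phi]$, of Hilbert series $y^{\dim\cO_\phi}\sum_{\chi\,:\,\cO_\chi=\cO_\phi}\dim L_\chi\cdot K_{\mfg,\chi}(y^{-2})=P_\phi(y)$. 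For the filtered $W_\phi^\eta$, the surjective Poisson map $W_\phi^\eta\onto\gr W_\phi^\eta=W_\phi^0$ induces a surjection $\HP_0(W_\phi^0)\onto\gr\HP_0(W_\phi^\eta)$; since $\Spec W_\phi^\eta$ is a transverse slice to $\cO_\phi$ inside the closure of the corresponding regular orbit, whose resolution (Slodowy's simultaneous resolution of slices, topologically trivial over $\mfh^*$) has the same cohomology as $\rho^{-1}(\phi)$, running the computation above there gives $\dim\HP_0(W_\phi^\eta)=\dim\HP_0(W_\phi^0)$, forcing the surjection to be an isomorphism. Finally $W_\phi=\cO(S_\phi)$ is a free graded module over $Z:=(\Sym\mfg)^\mfg$ (the slice maps flatly onto $\Spec Z$ with reduced central fiber $S_\phi\cap\cN$); spreading the decomposition of $M(\cN)$ over $\Spec Z$ exhibits $\HP_0(W_\phi)=\HP_0^{DR}(S_\phi)$ as a free graded $Z$-module with fiber $\gr\HP_0(W_\phi^\eta)$ at each $\eta$, hence generated by a graded space of Hilbert series $P_\phi(y)$.

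The main obstacle is twofold: (i) establishing the transverse-slice compatibility of $M(-)$ with the correct homological and, especially, weight shifts --- in particular that the dilation/Kazhdan discrepancy is exactly $\dim\cO_\phi$; and (ii) the flatness inputs behind the $W_\phi^\eta$ and $W_\phi$ assertions, namely that $\dim\HP_0(W_\phi^\eta)$ does not jump at special $\eta$ and that $\HP_0^{DR}$ commutes with base change along $\Spec Z$. Both are expected to follow from the established behaviour of $M(-)$ in flat Poisson families, but that is where the real content lies.
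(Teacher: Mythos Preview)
Your proposal is correct and follows essentially the same route as the paper: the $\HP_0(W^0_\phi)$ computation is derived from Theorem~\ref{t:mt} via transverse-slice compatibility of $M(-)$ with the appropriate weight shift (in the paper this is packaged as \cite[Theorem~5.1 and Remark~8.7]{PS-pdrhhvnc}, and your argument is exactly the content of the Proposition in \S\ref{ss:proof-lusztig-conj} specialized to degree zero), while the $W_\phi^\eta$ and $W_\phi$ assertions are handled by flatness/freeness over the center. The only cosmetic difference is that for $W_\phi^\eta$ the paper simply invokes the flatness result \cite[Theorem~1.10.(ii)]{ESwalg} rather than your surjection-plus-dimension-count argument, and for $W_\phi$ it cites \cite[Theorem~1.10.(iii)]{ESwalg}; your acknowledged ``obstacles'' (i) and (ii) are precisely these citations.
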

\label{r:stab} Note that $W_\phi$ and hence $\HP_0(W_\phi)$ inherit
actions of $\Stab_G(e,h,f)$ which commute with the dilation action and
hence preserve degrees; on $\HP_0(W_\phi)$ this action factors through
the finite group $\pi_0 \Stab_G(e,h,f)$ since the Lie algebra of
$\Stab_G(e,h,f) \subseteq G$ acts trivially.
As observed in \cite{ESwalg}, $\pi_0 \Stab_G(e,h,f) = \pi_0
\Stab_G(e)$ (since $\Stab_G(e,h,f)$ is the reductive part of
$\Stab_G(e)$), which is obviously equal to $\pi_0 \Stab_G(\phi)$, and
the isomorphism $H^*(\rho^{-1}(S_\phi \cap \cN)) \cong \HP^{DR}_{\dim
  S_\phi \cap \cN - *}(S_\phi \cap \cN)$ is compatible with the $\pi_0
\Stab_G(\phi)$-actions.

Moreover, $\pi_0 \Stab_G(\phi) = \pi_1 \cO_\phi$ (since we assumed $G$
simply-connected), and for $\chi \in \Irrep(W)$ such that
$\cO_\chi=\cO_\phi$, the local system $L_\chi$ is an irreducible
representation of $\pi_1 \cO_\phi$.  Thus, using the $\pi_0
\Stab_G(\phi)$ action, we can refine the corollary to yield the
following. Let $V_\chi^*$ be a graded vector space with Hilbert series
$K_{\mfg,\chi}(y^{-2})$.
\begin{corollary}\label{c:hilb-hp0-2}
As graded representations of $\pi_0 \Stab_G(\phi)=\pi_1 \cO_\phi$,
\[
\HP_0(W^0_\phi) \cong \bigoplus_{\chi \in \Irrep(W) \mid
  \cO_\chi=\cO_\phi} L_\chi \otimes V_\chi^*[-\dim \cO_\phi].
\]
\end{corollary}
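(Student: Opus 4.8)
The plan is to carry out the proof of Corollary~\ref{c:hilb-hp0} while keeping track of the $\pi_0\Stab_G(\phi)$-action, and then to identify the resulting graded representation. First I would reduce to the Poisson--de~Rham homology of the slice: $\HP_0(W_\phi^0)=\HP_0(\cO(S_\phi\cap\cN))=\HP_0^{DR}(S_\phi\cap\cN)$ by \cite{ESdm}, naturally in the Kazhdan grading and in the $\pi_0\Stab_G(\phi)$-action. The key input is the $\caD$-module form of Theorem~\ref{t:mt}: by Hotta--Kashiwara \cite{HKihs} (cf.\ \cite{ESwalg}) the canonical $\caD$-module $M(\cN)$ is the Springer sheaf, and, as a weakly $\bC^\times$-equivariant $\caD$-module for the square of the dilation action, it decomposes as
\[
M(\cN)\;\cong\;\bigoplus_{\chi\in\Irrep(W)}\IC(\overline{\cO_\chi},L_\chi)\otimes V_\chi^*,
\]
each simple summand carrying its canonical equivariant structure and $V_\chi^*$ being the graded multiplicity space, of Hilbert series $K_{\mfg,\chi}(y^{-2})$, with the $\pi_0\Stab_G(\phi)$-information recorded by the local systems $L_\chi$.

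Next I would restrict to the Kostant--Slodowy slice. As $S_\phi\cap\cN$ is a transverse Poisson slice to the leaf $\cO_\phi\subseteq\cN$ at $\phi$, by \cite{ESwalg,ESdm} the $\caD$-module $M(S_\phi\cap\cN)$ is the restriction of $M(\cN)$ along $\iota\colon S_\phi\cap\cN\hookrightarrow\cN$, up to a homological shift by the codimension $\dim\cO_\phi$ and $\pi_0\Stab_G(\phi)$-equivariantly; and since the Kazhdan action on $S_\phi$ differs from the restricted dilation action by the $\ad(h)^*$-twist $\lambda\mapsto\lambda^{-\ad(h)^*}$, which is realized through the $G$-equivariance of $M(\cN)$, this restriction also transports the weakly $\bC^\times$-equivariant structure, up to an overall Kazhdan weight shift by $\dim\cO_\phi$ (the same shift occurring in $P_\phi(y)$). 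Because restriction of an intersection cohomology sheaf to this transverse slice is again one --- $\IC(\overline{\cO_\chi},L_\chi)$ goes to $\IC(\overline{\cO_\chi}\cap S_\phi,\,L_\chi|_{\cO_\chi\cap S_\phi})$ up to shift, and to $0$ precisely when $\phi\notin\overline{\cO_\chi}$ --- applying this to the decomposition above yields, $\bC^\times$- and $\pi_0\Stab_G(\phi)$-equivariantly,
\[
M(S_\phi\cap\cN)\;\cong\;\bigoplus_{\chi\,:\,\cO_\phi\subseteq\overline{\cO_\chi}}\IC\bigl(\overline{\cO_\chi}\cap S_\phi,\,L_\chi|_{\cO_\chi\cap S_\phi}\bigr)\otimes V_\chi^*[-\dim\cO_\phi].
\]

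Then I would push forward to a point. The Kazhdan $\bC^\times$-action contracts $S_\phi\cap\cN$ onto $\{\phi\}$, so $\pi_*M(S_\phi\cap\cN)$ is computed by the stalk at $\phi$, compatibly with the Kazhdan grading and the $\pi_0\Stab_G(\phi)$-action, and $\HP_0^{DR}(S_\phi\cap\cN)$ is its $0$th cohomology. In the perverse normalization $\{\phi\}$ is a $0$-dimensional stratum of $\overline{\cO_\chi}\cap S_\phi$ whenever $\cO_\chi\neq\cO_\phi$, so the stalk support condition forces $\mathcal{H}^0$ of $\IC(\overline{\cO_\chi}\cap S_\phi,L_\chi|_{\cO_\chi\cap S_\phi})$ at $\phi$ to vanish for such $\chi$; for $\cO_\chi=\cO_\phi$ the intersection $\overline{\cO_\chi}\cap S_\phi$ is the single point $\phi$ and the stalk is the fibre $(L_\chi)_\phi$, placed in degree $0$. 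Hence
\[
\HP_0^{DR}(S_\phi\cap\cN)\;\cong\;\bigoplus_{\chi\,:\,\cO_\chi=\cO_\phi}(L_\chi)_\phi\otimes V_\chi^*[-\dim\cO_\phi],
\]
which is the assertion, since by the Springer correspondence $(L_\chi)_\phi$ is the irreducible $\pi_1\cO_\phi=\pi_0\Stab_G(\phi)$-representation $L_\chi$; taking Hilbert series returns $P_\phi(y)$, consistent with Corollary~\ref{c:hilb-hp0}. (As a check, $\HP_0^{DR}(S_\phi\cap\cN)\cong H^{\mathrm{top}}(\rho^{-1}(\phi))$ by \cite{ESwalg}, and by the decomposition theorem the latter has $W\times\pi_0\Stab_G(\phi)$-module structure $\bigoplus_{\cO_\chi=\cO_\phi}\chi\otimes L_\chi$.)

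The main obstacle is the middle step: verifying that $M(-)$ is functorial under restriction to the Kostant--Slodowy slice, that this restriction transports the weakly $\bC^\times$-equivariant structure with exactly the Kazhdan weight shift $\dim\cO_\phi$, and that the algebraically defined $\Stab_G(e,h,f)$-action on $W_\phi$ matches the monodromy of the local systems $L_\chi$. Most of this is already present in \cite{ESwalg}, which proves the plain isomorphism $H^*(\rho^{-1}(S_\phi\cap\cN))\cong\HP^{DR}_{\dim S_\phi\cap\cN-*}(S_\phi\cap\cN)$ together with its $\pi_0\Stab_G(\phi)$-equivariance; the remaining work is to propagate the homological and Kazhdan gradings through those arguments, after which the $\IC$-stalk and degree bookkeeping is routine.
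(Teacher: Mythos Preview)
Your proposal is correct and follows essentially the same route as the paper. The paper deduces Corollary~\ref{c:hilb-hp0-2} by the terse remark ``the same argument implies the refined statement,'' referring to Theorem~\ref{t:mt} together with \cite[Theorem~5.1]{PS-pdrhhvnc}; the Proposition in \S\ref{ss:proof-lusztig-conj} then records exactly the decomposition of $M(S_\phi\cap\cN)$ that you derive as your middle step (there via the Darboux--Weinstein splitting rather than a direct appeal to functoriality from \cite{ESwalg,ESdm}, but with the same content), and your stalk argument for extracting the $\chi$ with $\cO_\chi=\cO_\phi$ is the implicit degree-zero specialization.
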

Finally, for $W_\phi$ itself, Corollary \ref{c:hilb-hp0} yields
\begin{equation}
  h(\HP_0(W_\phi);y)=P_\phi(y) \prod_{i=1}^{r} (1-y^{2d_i})^{-1},
\end{equation}
where $r$ is the semisimple rank of $\mfg$ and $d_1, \ldots, d_r$ are
the degrees of the fundamental invariants (i.e., one-half the polynomial
degrees of generators of $(\Sym \mfg)^\mfg \cong (\Sym \mfh)^W$, for
$\mfh \subseteq \mfg$ a Cartan subalgebra with Weyl group $W$).
Similarly, Corollary \ref{c:hilb-hp0-2} implies we can write, as
graded representations of $\pi_0 \Stab_G(\phi)$,
\begin{equation}\label{e:wphi}
  \HP_0(W_\phi) \cong (\Sym \mfg)^\mfg \otimes \left( \bigoplus_{\chi \in
      \Irrep(W) \mid \cO_\chi=\cO_\phi} L_\chi \otimes V_\chi^*[-\dim
    \cO_\phi]\right).
\end{equation}

\subsection{Hochschild cohomology of quantum W-algebras}
Parallel to the previous corollaries, we can consider the quantum
analogue of $W_\phi$, defined as:
\begin{equation}\label{e:wphiq}
  W_\phi^q := (U \mfg / \mathfrak{m}_\phi' U \mfg)^{\mathfrak{m}_\phi},
  \quad W_\phi^{q,\eta} := W_\phi^q / \ker(\eta),
\end{equation}
where $\eta$ is a character of $Z(U\mfg) \cong (\Sym \mfg)^\mfg$.  By
\cite[Theorem 1.10.(i)]{ESwalg}, $\gr \HH_0(W^{q,\eta}_\phi) \cong
\HP_0(W^0_\phi)$, and it follows that $\gr \HH_0(W^q_\phi) \cong
\HP_0(W_\phi)$.  Thus, the following is an immediate consequence of
Corollary \ref{c:hilb-hp0} and its proof is omitted:
\begin{corollary}\label{c:quant}
  The Hilbert series of $\gr \HH_0(W^{q,\eta}_\phi)$, for all $\eta$,
  is $P_\phi(y)$.  Moreover, $\HH_0(W^q_\phi)$ is a free filtered module
  over $Z(U \mfg)$ generated by a filtered vector space whose
  associated graded vector space has Hilbert series $P_\phi(y)$.
\end{corollary}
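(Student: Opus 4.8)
The plan is to transport the content of Corollary \ref{c:hilb-hp0} through the comparison isomorphisms of \cite[Theorem 1.10.(i)]{ESwalg}, which were already recalled above: namely $\gr \HH_0(W^{q,\eta}_\phi) \cong \HP_0(W^0_\phi)$ for every central character $\eta$, and consequently $\gr \HH_0(W^q_\phi) \cong \HP_0(W_\phi)$. Here the associated graded is taken with respect to the filtration on $\HH_0$ induced by the Kazhdan filtration on the quantum $W$-algebra (the $\HH_0$ of a filtered algebra is a quotient of a filtered vector space, hence filtered), using that central reduction is compatible with passing to $\gr$, so that $\gr W^{q,\eta}_\phi = W^0_\phi$ and $\gr W^q_\phi = W_\phi$. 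All of the genuinely new input is packaged in Corollary \ref{c:hilb-hp0} (hence ultimately in Theorem \ref{t:mt}); the rest is bookkeeping, which is why the paper may legitimately omit it.

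For the first assertion I would simply combine the displayed isomorphism above with Corollary \ref{c:hilb-hp0}: the Hilbert series of $\gr \HH_0(W^{q,\eta}_\phi)$ equals that of $\HP_0(W^0_\phi)$, which is $P_\phi(y)$. Note no choice of $\eta$ enters, since the right-hand side is independent of $\eta$.

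For the second assertion, the plan is to invoke the standard lifting principle for filtered modules: if $R$ is an exhaustively and separatedly filtered algebra and $N$ a filtered $R$-module such that $\gr N$ is a free $\gr R$-module admitting a homogeneous basis, then any lift of such a basis to $N$ is a free $R$-basis of $N$, and the filtered subspace of $N$ it spans has associated graded canonically isomorphic to the corresponding graded subspace of $\gr N$. I would apply this with $R = Z(U\mfg)$, filtered by the restriction of the Kazhdan filtration (so that $\gr Z(U\mfg) = (\Sym \mfg)^\mfg$ as a subalgebra of $W_\phi$), and with $N = \HH_0(W^q_\phi)$, whose associated graded is $\HP_0(W_\phi)$. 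By Corollary \ref{c:hilb-hp0} the latter is a free graded $(\Sym \mfg)^\mfg$-module on a homogeneous basis whose span has Hilbert series $P_\phi(y)$; lifting then yields that $\HH_0(W^q_\phi)$ is a free $Z(U\mfg)$-module generated by a filtered vector space whose associated graded has Hilbert series $P_\phi(y)$, as claimed.

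There is essentially no obstacle here. The only points requiring a small check are the compatibility of the Kazhdan filtration with central reduction — so that the two comparison isomorphisms above are correctly stated and so that the gradings match the conventions built into $P_\phi(y)$ — and the exhaustiveness and separatedness of the Kazhdan filtration on $W^q_\phi$ and of the induced filtration on $\HH_0(W^q_\phi)$, which make the filtered lifting argument valid (here one also uses that each graded piece of $\HP_0(W_\phi)$ is finite-dimensional, which rules out any convergence issue even though the generating basis may be infinite). Both are standard facts about finite $W$-algebras.
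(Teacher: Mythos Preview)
Your proposal is correct and matches the paper's approach exactly: the paper explicitly omits the proof, stating that the corollary ``is an immediate consequence of Corollary \ref{c:hilb-hp0}'' via the comparison isomorphisms $\gr \HH_0(W^{q,\eta}_\phi) \cong \HP_0(W^0_\phi)$ and $\gr \HH_0(W^q_\phi) \cong \HP_0(W_\phi)$ from \cite[Theorem 1.10.(i)]{ESwalg}, which is precisely what you do. Your filtered-lifting argument for the second assertion is the natural way to fill in the bookkeeping the paper leaves implicit.
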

Similarly, the associated graded vector space of $\HH_0(W^q_\phi)$ has Hilbert
series
\begin{equation}
  h(\gr \HH_0(W^q_\phi); y)=P_\phi(y) \prod_{i=1}^{r} (1-y^{2d_i})^{-1}.
\end{equation}
\begin{remark}
  As in Remark \ref{r:stab}, $\HH_0(W^{q,\eta}_\phi)$ and
  $\HH_0(W^q_\phi)$ are also representations of the finite group
  $\pi_0 \Stab_G(e,h,f)=\pi_0 \Stab_G(\phi)$, and Corollary
  \ref{c:hilb-hp0-2} and \eqref{e:wphi} carry over replacing the left
  hand sides by $\HH_0(W^{q,\eta}_\phi)$ and $\HH_0(W^q_\phi)$,
  respectively, now viewing $V_\chi^*$ as a filtered vector space.
\end{remark}

\subsection{Higher cohomology of the Springer fiber}
The next result describes the bigrading on the (full) cohomology of
the Springer fiber, which is analogous to the associated graded vector
space of $H^*(T^*\mathcal{B})$ appearing in Theorem
\ref{t:can-filt}. That is, we compute the Poisson-de Rham homology of
the Slodowy slices to the nilpotent cone.  We do not attempt here to
construct actual filtrations on the Springer fiber cohomology.

When $\overline {\cO_\chi} \supseteq \mathcal{O}_\phi$, we will
consider the varieties (called S3 varieties, after Slodowy,
Spaltenstein, and Springer),
$S_{\chi,\phi} := \overline{\mathcal{O}_\chi} \cap S_\phi$.  Let
$\ih_{\chi,\phi}(x) := h(\IH^*(S_{\chi,\phi},L_\chi|_{\mathcal{O}_\chi
  \cap S_\phi});x)$
be the intersection cohomology Poincar\'e polynomial of
$S_{\chi,\phi}$ equipped with the local system
$L_{\chi}|_{\mathcal{O}_\chi \cap S_\phi}$.  For example, in the case
$\mfg=\mfsl_n$, the $L_\chi$ are all trivial, and by \cite[Theorem
2]{Lus-gpsuc}, we have
$\ih_{\chi,\phi}(x)=x^{\dim S_{\chi,\phi}}K_{\lambda\mu}(x^{-2})$
where $\lambda$ and $\mu$ are the partitions of $n$ corresponding to
$\chi$ and $\phi$, respectively, and $K_{\lambda\mu}(x)$ is the
ordinary one-variable Kostka polynomial.
\begin{corollary}\label{c:mc1}
The bigraded Hilbert series of $\HP_*^{DR}(S_\phi \cap \cN)$ is 
$$
h(\HP_*^{DR}(S_\phi \cap \cN);x,y) = y^{\dim \cO_\phi} \sum_{\chi}
x^{\dim S_{\chi,\phi}} \ih_{\chi,\phi}(x^{-1}) K_{\mfg,\chi}(y^{-2}),
$$
where the sum is taken over all $\chi \in \Irrep(W)$ such that
$\overline{\mathcal{O}_\chi} \supseteq \mathcal{O}_\phi$.
\end{corollary}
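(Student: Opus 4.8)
The plan is to deduce this from the $\caD$-module version of the main theorem (Theorem \ref{t:mt}) exactly as Corollary \ref{c:hilb-hp0} is deduced, but keeping track of all homological degrees rather than just the top one. First I would recall that the Poisson-de Rham homology of $S_\phi \cap \cN$ is computed by restricting the relevant (Harish-Chandra) $\caD$-module on $\cN$ to the Kostant--Slodowy slice $S_\phi$, using the transversality of $S_\phi$ to $\cO_\phi$ and the $\bC^\times$-equivariance with respect to the Kazhdan action. Concretely, $\HP^{DR}_*(S_\phi\cap\cN)$ is the hypercohomology of the restriction to $S_\phi \cap \cN$ of the $\caD$-module $M(\cN)$; and by Theorem \ref{t:mt}, $M(\cN)$ decomposes (after taking the pushforward to a point and tracking the weight grading) into pieces indexed by $\chi \in \Irrep(W)$, each piece being, up to a shift and twist by $K_{\mfg,\chi}(y^{-2})$ in the weight variable, the intersection cohomology $\caD$-module $\IC(\overline{\cO_\chi}, L_\chi)$. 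The decomposition theorem then expresses the hypercohomology of $i_{S_\phi}^! \IC(\overline{\cO_\chi},L_\chi)$ in terms of the intersection cohomology of the slice $S_{\chi,\phi} = \overline{\cO_\chi}\cap S_\phi$ with coefficients in $L_\chi|_{\cO_\chi \cap S_\phi}$, which is by definition $\ih_{\chi,\phi}(x)$, up to the standard degree shift by $\dim S_{\chi,\phi}$ coming from the normalization of $\IC$ and the passage from cohomological to homological grading.

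The key steps, in order, are: (1) set up the restriction-to-slice formalism, showing $\HP^{DR}_{\dim(S_\phi\cap\cN)-*}(S_\phi\cap\cN) \cong \bH^*\big(i_{S_\phi}^! M(\cN)\big)$ as bigraded vector spaces, with the weight grading on the right induced by the Kazhdan $\bC^\times$-action (this is essentially in \cite{ESwalg}, \cite{PS-pdrhhvnc}); (2) substitute the decomposition of $M(\cN)$ from Theorem \ref{t:mt}, reducing to computing $\bH^*\big(i_{S_\phi}^! \IC(\overline{\cO_\chi},L_\chi)\big)$ for each $\chi$ with $\overline{\cO_\chi}\supseteq\cO_\phi$; (3) identify this hypercohomology with $\IH^*(S_{\chi,\phi}, L_\chi|_{\cO_\chi\cap S_\phi})$ up to a degree shift, using that $S_\phi$ is a transverse slice so $i_{S_\phi}^!\IC_{\overline{\cO_\chi}} = \IC_{S_{\chi,\phi}}[\text{shift}]$; (4) carefully bookkeep the three gradings—the homological degree (recording $x$), the Kazhdan weight (recording $y$), and the internal grading of $V_\chi$ (the $K_{\mfg,\chi}(x^2)$ factor, which has disappeared into the cohomological-degree variable on the slice since the $W$-multiplicity is now realized geometrically via $\IH^*(S_{\chi,\phi})$)—and reconcile the shift conventions so that the $\ih_{\chi,\phi}(x^{-1})$ and $y^{\dim\cO_\phi}$ factors come out as stated; (5) conclude by summing over $\chi$. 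The $\mfsl_n$ special case then follows by plugging in the Lusztig formula $\ih_{\chi,\phi}(x) = x^{\dim S_{\chi,\phi}} K_{\lambda\mu}(x^{-2})$ from \cite{Lus-gpsuc}.

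The main obstacle I expect is step (4): getting every degree shift and grading-variable substitution exactly right. There are several competing normalizations in play — the $\IC$ normalization (perverse vs. ``naive''), the homological-vs-cohomological flip $\HP^{DR}_i = H^{-i}\pi_* M$, the shift $[-\dim\cO_\phi]$ that already appears in Corollary \ref{c:hilb-hp0-2}, the relation $\dim(S_\phi\cap\cN) = \dim\cN - \dim\cO_\phi$, and the codimension shift $\dim S_{\chi,\phi} = \dim\overline{\cO_\chi} - \dim\cO_\phi$ from restricting to the transverse slice. A clean way to control this is to first verify the formula in top cohomological degree, where it must reduce to Corollary \ref{c:hilb-hp0} (using $\ih_{\chi,\phi}(0) = \dim L_\chi$ when $\cO_\phi = \cO_\chi$ and the top-degree contribution otherwise), and then check consistency at $x=1$ against the known total dimension $h(H^*(\rho^{-1}(\phi));1,y)$-style identities; these two sanity checks pin down all the shifts. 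The decomposition-theorem input in step (3) is standard once transversality of the Kazhdan-contracting slice is invoked, so the conceptual content is entirely in steps (1)–(2), which are borrowed wholesale from Theorem \ref{t:mt} and the Hotta--Kashiwara analysis in \S\ref{s:dmod}.
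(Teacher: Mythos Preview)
Your proposal is correct and follows essentially the same route as the paper: restrict the decomposition of $M(\cN)$ from Theorem~\ref{t:mt} to the slice $S_\phi$, identify the resulting pieces as $\IC(S_{\chi,\phi},L_\chi|_{\cO_\chi\cap S_\phi})$ up to a shift, and push to a point. The only difference in packaging is that the paper first proves an intermediate Proposition computing $M(S_\phi\cap\cN)$ itself (via the Darboux--Weinstein formal product $\hat\cN\cong\widehat{S_\phi\cap\cN}\times\hat\cO_\phi$ together with \cite[Theorem 5.1]{PS-pdrhhvnc} to identify the local systems fiberwise), whereas you invoke the standard transversality property $i_{S_\phi}^!\IC_{\overline{\cO_\chi}}\cong\IC_{S_{\chi,\phi}}[\text{shift}]$ directly; these are two phrasings of the same fact, and your anticipated bookkeeping in step (4) is exactly where the shift by $\dim\cO_\phi$ in the paper's Proposition arises.
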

In the case $\mfg=\mfsl_n$, we obtain (in slightly rewritten form) a
special case of a statement proved modulo Proudfoot's conjecture in
\cite[Proposition 6.1]{PS-pdrhhvnc} (we show in the next subsection
that the relevant case of Proudfoot's conjecture also follows from our
result). Let $X_{\lambda,\mu} := S_{\chi,\phi}$ where $\lambda$ is the
partition of $n$ corresponding to $\chi$ and $\mu$ is the partition of
$n$ corresponding to $\phi$. Then $X_{(n)\mu}=S_\phi \cap \cN$.
\begin{corollary}
The bigraded Hilbert series of $\HP_*^{DR}(X_{(n)\mu})$ equals $y^{2n_\mu} \sum_{\nu\geq \mu}
  K_{\nu\mu}(x^2)K_{\nu(1^n)}(y^{-2})$.
\end{corollary}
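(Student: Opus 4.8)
The plan is to read this off from Corollary \ref{c:mc1} by specializing to $\mfg=\mfsl_n$. Fix $\phi\in\cN$ of Jordan type $\mu\vdash n$. Recall from Section \ref{s:lc} that in type $A$ the Springer correspondence sends the irreducible representation $\chi_\nu$ of $S_n$ labelled by $\nu\vdash n$ to the pair $(\cO_\nu,\mathrm{triv})$, where $\cO_\nu$ is the nilpotent orbit of Jordan type $\nu$, and that $K_{\mfg,\chi_\nu}(t)=K_{\nu(1^n)}(t)$. Since the closure order on nilpotent orbits in $\mfsl_n$ is the dominance order on Jordan types, the indexing condition $\overline{\cO_\chi}\supseteq\cO_\phi$ of Corollary \ref{c:mc1} becomes $\nu\geq\mu$, which is exactly the set of $\nu$ for which $K_{\nu\mu}\neq 0$; and the excerpt already records that $X_{(n)\mu}=S_\phi\cap\cN$, so the left-hand side is $P_{S_\phi\cap\cN}(x,y)$.

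Next I would substitute Lusztig's type-$A$ evaluation $\ih_{\chi_\nu,\phi}(x)=x^{\dim S_{\chi_\nu,\phi}}K_{\nu\mu}(x^{-2})$ (recalled just before Corollary \ref{c:mc1}) into that corollary. The factor $x^{\dim S_{\chi_\nu,\phi}}$ appearing in Corollary \ref{c:mc1} then cancels the factor $x^{-\dim S_{\chi_\nu,\phi}}$ coming from $\ih_{\chi_\nu,\phi}(x^{-1})$, regardless of the dimension convention in force, and the $\nu$-summand collapses to $K_{\nu\mu}(x^2)\,K_{\mfg,\chi_\nu}(y^{-2})=K_{\nu\mu}(x^2)\,K_{\nu(1^n)}(y^{-2})$. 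On the $y$-side, the prefactor $y^{\dim\cO_\phi}$ equals $y^{2n_\mu}$ by the standard dimension formula for the orbit $\cO_\mu$ (this is precisely the normalization of $n_\mu$, as in \cite[(8.2)]{PS-pdrhhvnc}). Assembling these pieces gives $P_{X_{(n)\mu}}(x,y)=y^{2n_\mu}\sum_{\nu\geq\mu}K_{\nu\mu}(x^2)\,K_{\nu(1^n)}(y^{-2})$, as claimed.

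Because Corollary \ref{c:mc1} is itself a consequence of the main theorem, there is no genuinely new input here; the part needing the most care is purely bookkeeping --- checking that the labelling conventions for the Springer correspondence, for the Kostka polynomials $K_{\nu\mu}$ and $K_{\nu(1^n)}$, and for the orbit-closure condition all match up, and that the dimension shifts on the $x$- and $y$-sides are the claimed ones. As the excerpt notes, an equivalent statement appears in \cite[Proposition 6.1]{PS-pdrhhvnc}, proved there only modulo Proudfoot's conjecture; deriving it instead from Corollary \ref{c:mc1} removes that hypothesis, and alternatively one could combine \cite[Proposition 6.1]{PS-pdrhhvnc} with the proof of Proudfoot's conjecture in type $A$ given in Section \ref{s:proudfoot}.
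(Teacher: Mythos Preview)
Your proof is correct and is exactly the derivation the paper intends: the corollary is presented there without a separate proof, as the direct specialization of Corollary~\ref{c:mc1} to $\mfg=\mfsl_n$ using Lusztig's formula $\ih_{\chi_\nu,\phi}(x)=x^{\dim S_{\chi_\nu,\phi}}K_{\nu\mu}(x^{-2})$, the identification $K_{\mfg,\chi_\nu}=K_{\nu(1^n)}$, and $\dim\cO_\phi=2n_\mu$. Your bookkeeping and the cancellation of the $x^{\pm\dim S_{\chi_\nu,\phi}}$ factors are all in order.
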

Here $n_\mu = \sum_i (i-1)\mu_i$ is the partition statistic of $\mu$,
which equals $\frac{1}{2}\dim \cO_\phi$, and $\leq$ is the dominance
ordering on partitions.

\section{Proudfoot's conjecture on symplectic duality}\label{s:proudfoot}
In \cite[3.4]{Pr12}, Proudfoot conjectured that, in the case that $X$
and $X^!$ are symplectic dual cones in the sense of
\cite[10.15]{BLPW-qcsr2} (with Poisson brackets of degree two), then
$\HP_0(\cO(X))\cong \IH^*(X^!)$ as graded vector spaces. We deduce
this now in a special case. Let $\mfg=\mfsl_r$ and let $\sigma$ be the
sign representation of the symmetric group $W=\mathfrak{S}_r$.  Let $\chi$ be an irreducible
representation of $W$ given by some partition $\lambda$ of $n$. Here
and following we denote the dual partition of $\lambda$ by
$\lambda^t$.  Let $\phi$ and $\phi'$ be nilpotent elements whose
Jordan blocks are given by the parts of $\lambda$ and $\lambda^t$,
respectively.  Let $X=S_\phi \cap \cN$ and $X^! =
\overline{\cO_{\phi'}}$.
The varieties $X$ and $X^!$ are symplectically dual (cf., e.g., \cite[\S 10.2.2]{BLPW-qcsr2}).
\begin{corollary} In the case above, $\HP_0(\cO(X))\cong \IH^*(X^!)$.
\end{corollary}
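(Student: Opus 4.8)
The plan is to deduce this corollary directly from the formulas already established, by comparing the graded Hilbert series of both sides and then upgrading the numerical equality to an isomorphism. First I would unwind the notation: with $\mfg = \mfsl_r$, $\phi$ the nilpotent of Jordan type $\lambda$, and $\phi'$ the nilpotent of Jordan type $\lambda^t$, the variety $X = S_\phi \cap \cN$ has $\HP_0(\cO(X)) = \HP_0(W_\phi^0)$, whose Hilbert series is $P_\phi(y)$ by Corollary \ref{c:hilb-hp0}. In type $A$ all local systems $L_\chi$ are trivial and the Springer correspondence $\chi \leftrightarrow \cO_\chi$ is a bijection onto nilpotent orbits, so the sum defining $P_\phi(y)$ collapses to a single term: $P_\phi(y) = y^{\dim \cO_\phi} K_{\lambda(1^r)}(y^{-2})$, where I use $K_{\mfg,\chi}(t) = K_{\lambda(1^n)}(t)$ from the conventions in Section \ref{s:lc} with $n = r$.

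Next I would compute the right-hand side. For $X^! = \overline{\cO_{\phi'}}$, the intersection cohomology Poincaré polynomial is given by Lusztig's result \cite[Theorem 2]{Lus-gpsuc}: taking $\chi$ to be the trivial representation (so $\cO_\chi$ is the regular orbit, $\lambda = (r)$ there) and $\phi'$ in the role of the base point, one gets $h(\IH^*(\overline{\cO_{\phi'}}); x) = x^{\dim \cO_{\phi'}} K_{(r) \mu'}(x^{-2})$ where $\mu'$ is the partition $\lambda^t$; more directly, $\IH^*(\overline{\cO_{\phi'}})$ has Hilbert series built from the Kostka polynomial $K_{(r)\lambda^t}$. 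The key combinatorial input is then the classical identity $K_{(r)\lambda^t}(t) = K_{\lambda(1^r)}(t)$ (equivalently, transposing the partition indices in the Kostka–Foulkes polynomial with appropriate reindexing of the base), together with the dimension bookkeeping $\dim \cO_{\phi'}$ versus $2n_\mu$ — using $\dim \cO_\phi + \dim \cO_{\phi'} = $ the relevant constant and $\frac{1}{2}\dim \cO_\phi = n_\mu$ as recorded after Corollary \ref{c:mc1}. Matching these exponents and the Kostka polynomial shows $P_\phi(y)$ equals the Hilbert series of $\IH^*(X^!)$.

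Having matched Hilbert series, I would then argue the isomorphism is not merely numerical. One clean route: invoke Corollary \ref{c:mc1} specialized to this setting, which already expresses $P_{S_\phi \cap \cN}(x,y)$ in terms of the $\ih_{\chi,\phi}(x)$; the $x$-degree-zero (equivalently $y$-weight) part of $\HP_0$ is what we want, and the relevant $\IH$ Poincaré polynomials appearing there are exactly the $\ih$ of the $S_3$ varieties $S_{\chi,\phi}$, one of which, for the appropriate $\chi$, is $X^!$ up to the symplectic-duality identification. Alternatively, and more robustly, I would note that \cite[Proposition 6.1]{PS-pdrhhvnc} already proves $\HP_0(\cO(X)) \cong \IH^*(X^!)$ \emph{modulo} Proudfoot's conjecture in exactly this type-$A$ case, and since our Theorem \ref{t:mt} (hence Corollary \ref{c:hilb-hp0}) establishes the Hilbert-series computation that was the only unconditional input missing, the isomorphism follows.

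The main obstacle, and the step requiring genuine care rather than bookkeeping, is the combinatorial identity relating the two Kostka polynomials under partition transposition and the correct normalization of degree shifts: one must be sure that $y^{\dim \cO_\phi} K_{\lambda(1^r)}(y^{-2})$ and the $\IH$ Poincaré polynomial of $\overline{\cO_{\lambda^t}}$ coincide on the nose, including the cohomological grading convention (real versus complex degree, and the shift by $\dim$) — a sign or factor-of-two error here would be fatal. Once that identity is pinned down, and the symplectic-duality identification of $X^!$ with the correct $S_3$ variety (or orbit closure) is in place via \cite[\S 10.2.2]{BLPW-qcsr2}, the corollary is immediate from the results of Sections \ref{s:lc}–\ref{s:spr-fiber}.
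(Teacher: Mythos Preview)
Your overall strategy---compute the Hilbert series of each side via Corollary \ref{c:hilb-hp0} and a formula for intersection cohomology, then match them---is exactly the paper's. But there is a genuine error in your computation of the right-hand side.

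When you invoke Lusztig's theorem by ``taking $\chi$ to be the trivial representation and $\phi'$ in the role of the base point,'' the S3 variety you obtain is $S_{\mathrm{triv},\phi'}=\overline{\cO_{\mathrm{triv}}}\cap S_{\phi'}=\cN\cap S_{\phi'}$, which is \emph{not} $\overline{\cO_{\phi'}}$. To get $\IH^*(\overline{\cO_{\phi'}})$ one must instead take the base point $0$ (so the slice is all of $\mfg^*$) and let $\chi$ be the representation indexed by $\lambda^t$; then $S_{\chi,0}=\overline{\cO_{\phi'}}$, and Lusztig's formula---or equivalently \eqref{e:ih-fla}---gives
\[
h(\IH^*(X^!);x)=x^{\dim \cO_{\phi'}}K_{\lambda^t(1^r)}(x^{-2}).
\]
The Kostka polynomial that appears is $K_{\lambda^t(1^r)}=K_{\mfg,\chi\otimes\sigma}$, not $K_{(r)\lambda^t}$.

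This matters because your ``classical identity'' $K_{(r)\lambda^t}(t)=K_{\lambda(1^r)}(t)$ is false. The left side is always a monomial (there is a unique semistandard tableau of one-row shape with any given content), whereas the right side has $\dim\chi$ terms; already for $r=3$, $\lambda=(2,1)$ the two disagree. The identity the paper actually uses is
\[
t^{\dim \cO_\chi}K_{\mfg,\chi}(t^{-2})=t^{\dim \cO_{\chi\otimes\sigma}}K_{\mfg,\chi\otimes\sigma}(t^{-2}),
\]
i.e.\ $t^{\dim \cO_\phi}K_{\lambda(1^r)}(t^{-2})=t^{\dim \cO_{\phi'}}K_{\lambda^t(1^r)}(t^{-2})$, which is \cite[(8.3)]{PS-pdrhhvnc} and follows from Poincar\'e duality on $H^*(\cB)$ together with the dimension formula for nilpotent orbits.

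Two smaller points. Your fallback to \cite[Proposition 6.1]{PS-pdrhhvnc} is circular: that proposition \emph{assumes} Proudfoot's conjecture, which is precisely the content of this corollary. And your concern that matching Hilbert series is ``merely numerical'' is misplaced---the statement is about graded $\bC$-vector spaces, and two such objects with the same Hilbert series are isomorphic, so once the series agree there is nothing further to upgrade.
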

\begin{proof} This follows from Corollary \ref{c:hilb-hp0} by the
  proof of \cite[Proposition 8.9]{PS-pdrhhvnc}.  Here is an outline
  for the reader's convenience. By Corollary \ref{c:hilb-hp0},
  $h(\HP_0(\cO(X));y) = y^{\dim \cO_\chi} K_{\mfg,\chi}(y^{-2})$. On
  the other hand, it is well known that $h(\IH^*(X^!);x) = x^{\dim
    \cO_{\chi \otimes \sigma}}K_{\mfg,\chi \otimes \sigma}(x^{-2})$
  (see \eqref{e:ih-fla} below, noting that in this case all of the
  local systems $L_\chi$ are trivial).
 We thus need to establish the identity $t^{\dim
   \cO_\chi}K_{\mfg,\chi}(t^{-2})=t^{\dim \cO_{\chi \otimes
     \sigma}}K_{\mfg,\chi \otimes \sigma}(t^{-2})$, which is
 \cite[(8.3)]{PS-pdrhhvnc} (it follows from palindromicity of
 $K_{\mfg,\chi}(t)$, Poincar\'e duality for $H^*(\cB)$, and the
 dimension formula for $\cO_{\mfg,\chi})$.
\end{proof}
\begin{remark}
  By Corollary \ref{c:hilb-hp0-2}, we can also write a formula similar
  to the one above which holds for arbitrary type: as graded $\pi_0
  \Stab_G(\phi) = \pi_1 \cO_\phi$ representations,
\[
\HP_0(\cO(S_\phi \cap \cN)) \cong \bigoplus_{\chi \in \Irrep(W) \mid
  \cO_\chi=\cO_\phi} L_\chi \otimes \IH^*(\cO_\chi,L_\chi).
\]
\end{remark}

\section{$\caD$-module formulation and proof}\label{s:dmod}
\subsection{Recollections on $\caD$-modules on $\cN$}
We consider two weakly $\bC^\times$-equivariant $\caD$-modules
on $\cN$, studied in \cite{HKihs}.  The first is the pushforward,
$\rho_* \Omega_{T^*\cB}$, which is actually strongly equivariant
(since $\rho$ is equivariant).  In \cite{HKihs}, it is explained that
this $\caD$-module has a canonical $W$-action, and we obtain a
decomposition of $W$-equivariant $\caD$-modules,
\[
\rho_* \Omega_{T^*\cB} \cong \bigoplus_{\chi \in \Irrep(W)} \chi
\otimes \mathrm{IC}(\mathcal{O}_\chi,L_\chi),
\]
where $(\cO_\chi, L_\chi)$ is the pair of a nilpotent coadjoint orbit
$\cO_\chi$ and irreducible local system $L_\chi$ on $\cO_\chi$; this
is one way to define the Springer correspondence $\chi \mapsto
(\cO_\chi, L_\chi)$.

The second weakly $\bC^\times$-equivariant $\caD$-module on
$\cN$ is defined using the embedding $\cN \subseteq \mfg^*$. By definition (following Kashiwara),
$\caD$-modules on $\cN$ are canonically
identified with right $\caD$-modules on $\mathfrak{g}^*$ supported on
$\cN$, via the maps $M \mapsto i_* M$ and $N \mapsto i^! N$. Since
$\mathfrak{g}^*$ is smooth and affine, right $\caD$-modules there can
be defined as right modules over the ring $\caD(\mathfrak{g}^*)$ of
differential operators with polynomial coefficients.  We have the map
$\ad: \mathfrak{g} \to \caD(\mathfrak{g}^*)$, such that $\ad(x)$ is
the vector field acting by the adjoint action: precisely,
$\ad(x)(v)=[x,v]$ for $v \in \mathfrak{g} \subseteq \Sym \mfg =
\cO(\mfg^*)$, which extends uniquely to a derivation.

Then, we consider the $\caD$-module $M(\cN) := i^!\bigl(
(\ad(\mathfrak{g})+I(\cN))\cdot \caD(\mathfrak{g}^*) \setminus
\caD(\mathfrak{g}^*)\bigr)$. This is weakly $\bC^\times$-equivariant
with respect to the square of the dilation action on $\mfg^* \supseteq
\cN$, since $\ad(\mathfrak{g})$ and $I(\cN)$ are spanned by
homogeneous elements.
It is \emph{not} strongly equivariant in general, since the Euler
vector field need not be contained in $\caD(\mathfrak{g}^*) \cdot
(\ad(\mathfrak{g}) + I(\cN))$. (In fact, one can see that it is never
strongly equivariant, for example using our main result below.)
\begin{remark}\label{r:my-defn} The $\caD$-module $M(\cN)$, called $\mathcal{M}_0$ in
  \cite[\S 6]{HKihs}, can be identified with the $\caD$-module
  $M(\cN)$ defined in \cite{ESdm} using the Poisson bracket on
  $\cO(\cN)$. For the convenience of the reader, we recall the
  definition of the latter and the reason why it is the same as the
  $\caD$-module we define (although we will not actually need it for
  the arguments of this paper); for a more detailed treatment see
  \cite{ESdm,ES-survey}.  For a general affine Poisson variety $X$,
  i.e., the spectrum of a Poisson algebra $\cO(X)$, define
  $M(X) := H(X) \cdot \caD_X \setminus \caD_X$, where $H(X)$ is the
  Lie algebra of Hamiltonian vector fields on $X$, and $\caD_X$ is the
  standard $\caD$-module on $X$. The latter can be defined for any
  embedding $i: X \to V$ into a smooth variety $V$ as
  $\caD_X = i^! (I_X \cdot \caD(V) \setminus \caD(V))$, with $\caD(V)$
  the ring of differential operators on $V$ (viewed as a right module
  over itself) and $I_X \subseteq \cO(X)$ the ideal of $X$
  (Kashiwara's theorem implies this definition does not depend on the
  choice of $V$).  The $\caD$-module $\caD_X$ has the property
  $\Hom(\caD_X, N) = \Gamma_{\caD}(X,N)$, where
  $\Gamma_{\caD}(X,N) := \{f \in \Gamma(V, i_* N) \mid I_X \cdot f =
  0\}$
  is the functor of sections scheme-theoretically supported on $X$,
  which is independent of the choice of embedding as a consequence of
  the proof of Kashiwara's theorem. This property could also be used
  to define $\caD_X$. One then checks then that $H(X)$ acts as a Lie
  algebra on $\caD_X$ via its left multiplication on $\caD(V)$, which
  makes the definition given of $M(X)$ sensible. Explicitly,
  $M(X) = i^!\bigl( \caD(V) \cdot (I_X + \widetilde{H(X)}) \setminus
  \caD(V)\bigr)$,
  with $\widetilde{H(X)}$ the space of vector fields on $V$ which are
  parallel to $X$ and restrict there to elements of $H(X)$.  The
  $\caD$-module $M(X)$ has the defining property that
  $\Hom(M(X),N) = \Gamma_{\caD}(X,N)^{H(X)}$ for every $\caD$-module
  $N$ on $X$.

  In the case $X=\cN$ and $V=\mfg^*$, actually $V$ itself is
  Poisson. To identify $M(\cN)$ as in \cite{ESdm} with
  $(\ad(\mathfrak{g}) + I(\cN)) \cdot \caD(\mathfrak{g}^*) \setminus
  \caD(\mathfrak{g}^*)$,
  one can argue as follows.  In this case, $\cN \to \mfg^*$ is a
  Poisson embedding, i.e., the Poisson bracket on $\mfg^*$ preserves
  the ideal of $\cN$ and induces the bracket on $\cN$.  In particular,
  $H(\cN)=H(\mfg^*)|_{\cN}$.  Thus it suffices to show that
  $(\ad(\mathfrak{g}) + I(\cN)) \cdot \caD(\mfg^*) =
  (H(\mathfrak{g}^*)+I(\cN)) \cdot \caD(\mfg^*)$.
  In fact, more is true:
  $\ad(\mathfrak{g}) \cdot \caD(\mfg^*) = H(\mathfrak{g}^*) \cdot
  \caD(\mfg^*)$,
  and this holds for an arbitrary finite-dimensional Lie algebra
  $\mfg$.  To see this, first observe that
  $\ad(\mathfrak{g}) \subseteq H(\mathfrak{g}^*)$, so we only need to
  show that
  $H(\mathfrak{g}^*) \subseteq \ad(\mathfrak{g}) \cdot
  \caD(\mfg^*)$.
  Next, for every $f \in \cO(\mfg^*)$, we have
  $\xi_f = \sum_i f_i \ad(x_i)$ for some $f_i \in \cO(\mfg^*)$ and
  $x_i \in \mathfrak{g}$.  Then, in $\caD(\mfg^*)$, we have
  $\xi_f = \sum_i \ad(x_i) \cdot f_i - \sum_i \{x_i,f_i\}$.  Finally,
  one observes that $\sum_i \{x_i, f_i\} = 0$ since $\xi_f$ was
  Hamiltonian.  Thus
  $H(\mathfrak{g}^*) \subseteq \ad(\mfg) \cdot \caD(\mfg^*)$, as
  desired.
\end{remark}
\subsection{Hotta and Kashiwara's theorem}
 We recall the following result of Hotta and Kashiwara (the case
  $\lambda=0$ of \cite[Theorem 6.1]{HKihs}).  Let $\mathrm{Har}(\mfh^*) \subseteq
  \cO(\mfh^*)=\Sym \mfh$ be the subspace of harmonic polynomials, i.e.,
\[
\mathrm{Har}(\mfh^*) := \{f \in \cO(\mfh^*) \mid P(\partial_x)(f) = 0, \forall P \in
(\Sym \mfh^*)^W_+\},
\]
where we denote the embedding $\Sym \mfh^* \into \caD(\mfh^*)$ as
constant-coefficient differential operators by $P \mapsto
P(\partial_x)$, $(\Sym \mfh^*)^W$ is the $W$-invariant subalgebra, and
$(\Sym \mfh^*)^W_+$ is the augmentation ideal (of operators whose
constant term is zero).

Let $\sigma$ be the sign representation of $W$, placed in degree zero.
\begin{theorem}[Hotta and Kashiwara]\label{t:hk}
  There is a canonical isomorphism of weakly equivariant
  $\caD$-modules,
\begin{equation}
  M(\cN) \stackrel{\sim}{\longrightarrow} \Hom_W \left( \mathrm{Har}(\mfh^*) \otimes \sigma, \rho_* \Omega_{T^*\cB} \right).
\end{equation}
\end{theorem}
The above result follows from Theorem 5.2, Proposition 6.3.1, and
Theorem 6.1 of \cite{HKihs}, and the canonical isomorphism is
originally constructed as
$$
M(\cN)^F \stackrel{\sim}{\longrightarrow} \Hom_W \left( \mathrm{Har}(\mfh),
  (\rho_* \Omega_{T^*\cB})^F \otimes \sigma \right)
$$
where the superscript $F$ denotes the Fourier transform (there also
the $\caD$-modules are on $\mfg$ and $\mfh$ rather than $\mfg^*$ and
$\mfh^*$, so $\mathrm{Har}(\mfh^*)$ appears). But, for a weakly
$\mathbb{C}^{\times}$-equivariant $\caD$-module $M$ and graded vector
space $V$, the fact that $F(\Eu_{\mfg^*}) = - \Eu_{\mfg^*} - \dim
\mfg$ implies that $(M \otimes V)^F \cong M^F \otimes V^*$.
We recover the statement of the theorem.

\subsection{Proof of Theorem \ref{t:can-iso}}
Theorem \ref{t:can-iso} is an immediate application of Hotta and
Kashiwara's theorem.  Namely, pushing forward to a point,
$H^{-i} \pi_* M(\cN) = \HP_i^{DR}(\cN)$ and
\[
H^{-i} \pi_* \Hom_W \left( \mathrm{Har}(\mfh^*) \otimes \sigma, \rho_*
  \Omega_{T^*\cB} \right) = \Hom_W(\Sym \mfh/((\Sym \mfh)_+^W) \otimes
\sigma, H^{2 \dim \cB - i} (T^*\mathcal{B})).
\]

\subsection{Proof of Theorem \ref{t:can-filt}}
To prove Theorem \ref{t:can-filt}, we construct a
$\bC^\times$-equivariant vector bundle on $\mfh^*/W$.  Let
$\chi: \mfg^* \to \mfg^*/\!/G \iso \mfh^*/W$ be the coadjoint quotient
composed with the Chevalley isomorphism. Consider the
$(\cO(\mfg^*)^G, \caD(\mfg^*))$-bimodule
$\widetilde M := \ad(\mfg) \caD(\mfg^*) \setminus \caD(\mfg^*)$, equipped
with the $\cO(\mfg^*)^G$-structure by left multiplication by
$G$-invariant functions. This has the property that, for every
$\bar \lambda \in \mfg^*/\!/G$, the fiber
$\bar \lambda \otimes_{\cO(\mfg^*)^G} \widetilde M$ is nothing but
$M(\chi^{-1}(\bar \lambda))$.  Moreover, by \cite[Theorem
1.2]{LS-kerhomHC}, $\widetilde M$ is flat over $\cO(\mfg^*)^G$ (note that
the theorem there actually applies to the corresponding left
$\caD(\mfg^*)$-module
$\caD(\mfg^*) / \caD(\mfg^*) \ad(\mfg) = \widetilde M \otimes_{\cO_{\mfg^*}}
\Omega_{\mfg^*}$,
but its flatness is equivalent to that of $\widetilde M$).  Hence, this
tensor product is derived:
$\bar \lambda \otimes_{\cO(\mfg)^G} \widetilde M = H^0(\bar \lambda
\otimes^{L}_{\cO(\mfg)^G} \widetilde M)$.

We can now take the $\caD$-module pushforward to a point (under
$\pi: \mfg \to \Spec \bC$), to obtain $\pi_* \widetilde M$, a
quasi-coherent sheaf on $\mfg^*/\!/G \cong \mfh^*/W$ whose fibers are precisely the
Poisson-de Rham homologies:
\[
H^{-i} (\bar \lambda \otimes^L_{\cO(\mfg)^G} \pi_* \widetilde M) \cong
H^{-i} (\pi_* (\bar \lambda \otimes^L_{\cO(\mfg)^G} \widetilde M))
\cong H^{-i} (\pi_* \widetilde
M|_{\bar \lambda}) = \HP_{i}^{DR}(\chi^{-1}(\bar \lambda)).
\]
Here, the first isomorphism follows from associativity of derived tensor product, since $\pi_*(N) = N \otimes_{\caD(\mfg^*)} \cO(\mfg^*)$ for every right $\caD(\mfg^*)$-module $N$.
For $\lambda \in \mfh_\reg^*$, the variety
$\chi^{-1}(\bar \lambda)$ is symplectic and hence
$\HP_i^{DR}(\chi^{-1}(\bar \lambda)) = H^{\dim \chi^{-1}(\bar \lambda)
  - i}(\chi^{-1}(\bar \lambda))$ (by \cite[Example 2.6, Remark 2.14]{ESdm}).  

Next, consider the Grothendieck--Springer simultaneous resolution
$\widetilde{\mfg}^* \to \mfg^*$.  Let
$\mathfrak{n} := [\mathfrak{b}, \mathfrak{b}]$.  Recall that
$\widetilde{\mfg}^* := \{(\mathfrak{b},\phi) \in \mathcal{B} \times
\mfg^* \mid \phi \in \mathfrak{n}^\perp\}$.
Let $\pr_1: \widetilde{\mfg}^* \to \mathcal{B}$ and
$\pr_2: \widetilde{\mfg}^* \to \mfg^*$ be the two projections.  There
is also a canonical map
$\widetilde \pi: \widetilde{\mfg}^* \to \mfh^*$, given by
$\widetilde \pi(\mathfrak{b},\phi) =
\phi|_{\mathfrak{b}/\mathfrak{n}}$.
We can consider the composition
$\widetilde{\mfg}^* \to \mathcal{B} \times \mfh^* \to \mfh^*$, with
the first map $(\pr_1 \times \widetilde \pi)$.  The first map makes
$\widetilde{\mfg}^*$ a principal $\mathfrak{n}$-bundle over
$\mathcal{B} \times \mfh^*$.  The second map is the projection.  Put
together, we obtain a canonical isomorphism
$H^*(\widetilde \pi^{-1}(\lambda)) \cong H^*(\mathcal{B})$ for all
$\lambda$.  Furthermore, when $\lambda \in \mfh_\reg^*$, $\pr_2$
induces an isomorphism
$\displaystyle \widetilde \pi^{-1}(\lambda) \iso \chi^{-1}(\bar
\lambda) \subseteq \mfg^*$.
Therefore for regular $\lambda$ we obtain canonical isomorphisms
$H^{*}(\chi^{-1}(\bar \lambda)) \cong H^*(\mathcal{B})$ (note that
these depend on $\lambda \in \mfh_\reg^*$, and not just on the
$W$-orbit $\bar \lambda$).

Let $q: \mfh^* \to \mfh^*/W$ be the quotient. We can consider the
pullback
$q^* \pi_* \widetilde M = \cO(\mfh^*) \otimes_{\cO(\mfh^*)^W} \pi_*
\widetilde M$.
The previous paragraphs show that
$H^{-i}(q^* \pi_* \widetilde M)|_{\mfh_\reg^*}$ canonically
identifies with the Gauss--Manin system
$H^{2\dim \cB - i}(\widetilde \pi^{-1}(\lambda))$ over
$\mfh_\reg^*$, and that every fiber is canonically identified with
$H^{2 \dim \cB-i}(\mathcal{B})$.

\begin{lemma}\label{lem:equivb}
The sheaf $\pi_* \widetilde M$ is a finite rank $\bC^\times$-equivariant vector bundle on $\mfh^* / W$. 
\end{lemma}

\begin{proof}
First note that, by \cite[Theorem 1.1]{LS-kerhomHC},
$\widetilde M$ is a $\caD(\mfh^*)^W$-module. Let
$N := \Omega_{\mfh^*} \otimes_{\cO(\mfh^*)} \caD(\mfh^*)
\otimes_{\caD(\mfh^*)^W} \widetilde M$,
a right $\caD(\mfh^* \times \mfg^*)$-module.  We can furthermore
consider an alternative description of $N$ from \cite{HKihs}: Let
$\iota_1: \cO(\mfh^*)^W \to \cO(\mfg^*)^G$ and
$\iota_2: (\Sym \mfh^*)^W \to (\Sym \mfg^*)^G$ be the Chevalley
isomorphisms (considering $\Sym \mfh^* \subseteq \caD(\mfh^*)$ the
constant coefficient operators and similarly for $\mfg^*$). Define 
\begin{multline*}
N' = (\ad (\mfg) \caD(\mfh^* \times \mfg^*)
+ \{P - \iota_1(P) \mid P \in \cO(\mfh^*)^W\} \caD(\mfh^* \times \mfg^*)  \\
+ \{Q - \iota_2(Q) \mid Q \in (\Sym \mfh^*)^W
\}\caD(\mfh^* \times \mfg^*)) \setminus \caD(\mfh^* \times \mfg^*).
\end{multline*}
This is the right  corresponding to the left
 $\caD(\mfg^* \times \mfh^*)$-module
denoted $\mathcal{N}$ in \cite[Theorem 4.2]{HKihs}.
By
\cite[Theorem 4.2]{HKihs}, we know that $N'$ is a simple
holonomic $\caD(\mfh^* \times \mfg^*)$-module, and moreover,
\begin{equation}\label{e:hkihs}
N' \cong f_* \Omega_{\widetilde \mfg^*}
\end{equation}
where
$f = \widetilde \pi \times \pr_2: \widetilde{\mfg^*} \to \mfh^* \times
\mfg^*$.
By definition, there is a surjection $N' \to N$, and since $N'$ is
simple, $N \cong N'$.  Now letting
$\pi': \mfh^* \times \mfg^* \to \mfh^*$ be the first projection, we
have $\pi'_* N \cong \widetilde \pi_* \Omega_{\widetilde{\mfg^*}}$.
Decomposing $\widetilde \pi$ as the composition
$\widetilde{\mfg^*} \to \cB \times \mfh^* \to \mfh^*$, we obtain that
the right $\caD$-module $H^{-i}\pi'_* N$ corresponds (via the
right-left $\caD$-module correspondence followed by the
Riemann--Hilbert correspondence) to the trivial local system on
$\mfh^*$ with fibers $H^{2\dim \cB-i}(\cB)$.

Finally, by definition, we have
\begin{equation}\label{eq:NM}
  \pi'_* N = \Omega_{\mfh^*} \otimes_{\cO(\mfh^*)} \caD(\mfh^*)
  \otimes_{\caD(\mfh^*)^W} \pi_* \widetilde M.
\end{equation}
It is well-known that $\caD(\mfh^*)$ is an invertible
$(\caD(\mfh^*)^W, \caD(\mfh^*) \rtimes W)$-bimodule, inducing a Morita
equivalence between the two algebras. In particular, $\caD(\mfh^*)$ is
a projective $\caD(\mfh^*)^W$-module. Thus $\pi_* \widetilde M$ is
itself a $\caD(\mfh^*)^W$-module summand of $\pi'_* N$. The latter is
a graded finitely-generated projective $\cO(\mfh^*)$-module, and hence
also a finitely-generated projective $\cO(\mfh^*)^W$-module.  Thus, so
is $\pi_* \widetilde M$.  In other words, $\pi_* \widetilde M$ is a
$\bC^\times$-equivariant finite-rank vector bundle over
$\mfh^*/W$. This completes the proof of the lemma.
\end{proof}

Observe that, since the map $q$ is flat and $\bC^\times$-equivariant,
Lemma \ref{lem:equivb} implies that $q^* \pi_* \widetilde M$ is a
finite rank $\bC^\times$-equivariant vector bundle on $\mfh^*$. For
every $\lambda \in \mfh^*_\reg$, we can restrict
$q^* \pi_* \widetilde M$ to the line $\bC \cdot \lambda$, and we
obtain a $\bC^\times$-equivariant vector bundle on the line $\bA^1$,
$L_\lambda := (q^* \pi_* \widetilde M)|_{\bC \cdot \lambda}$.  But
finite rank $\bC^\times$-equivariant vector bundles on the line are
well-known to be the same thing as finite-dimensional filtered vector
spaces: given a finite-dimensional filtered vector space $F^\cdot V$,
one takes the associated $\bC[t]$-module
$L = \bigoplus_{i \in \bZ} F^{\leq i} V \cdot t^i$, and the opposite
direction is given by setting $V := L|_{t=1}$ and taking the
filtration by the image of weight spaces $\bigoplus_{j \leq i} L_j$.
Moreover, the fiber $L|_0$ at zero is the associated graded vector
space $\gr V$.  Therefore
$(q^* \pi_* \widetilde M)|_{\bC \cdot \lambda}$ is nothing but a
collection of filtrations on the underlying cohomologies
$H^{-i}(q^* \pi_* \widetilde M)|_{\lambda} \cong H^{2\dim
  \cB-i}(\cB)$.
This produces the desired filtrations on the cohomology of the flag
variety. The associated graded vector spaces are
$H^{-i} (q^* \pi_* \widetilde M)|_{0} = \HP^{DR}_i(\cN)$.

It remains to check the $W$-equivariance. First,
$(q^* \pi_* \widetilde M)|_{\lambda} = (q^* \pi_* \widetilde M)_{w(\lambda)}$
for all $\lambda$.  On the other hand, to obtain a filtration on the
flag variety cohomology, we identify
$(q^* \pi_* \widetilde M)|_{\mfh_\reg^*}$ with the Gauss--Manin system
of $\widetilde \pi^{-1}(\mfh_\reg^*) \to \mfh_\reg^*$. The latter
is $W$-equivariant, and the composition
$H^*(\widetilde \pi^{-1}(\lambda)) \cong H^*(\cB) \cong H^*(\widetilde
\pi^{-1}(w(\lambda))) \cong H^*(\widetilde \pi^{-1}(\lambda))$,
of applying the Gauss--Manin connection twice followed by the
isomorphism
$\widetilde \pi^{-1}(w(\lambda)) \cong \chi^{-1}(\bar \lambda) \cong
\widetilde \pi^{-1}(\lambda)$, is the action of $w$.

\subsection{Proof of Corollary \ref{c:irr-filt}}\label{s:proof-cor}
The proof consists of studying the relationship between the
right $\caD(\mfh^*)$-module $\pi'_*N$ and the left $\caD(\mfh^*)^W$-module
$\pi_* \widetilde M$.  Applying the Morita equivalence between $\caD(\mfh^*) \rtimes W$ and $\caD(\mfh^*)^W$ defined by $\caD(\mfh^*)$ to the expression (\ref{eq:NM}) for $\pi'_* N$ implies that 
\[
\pi_* \widetilde M = \caD(\mfh^*) \otimes_{\caD(\mfh^*) \rtimes W}
\Omega_{\mfh^*}^{-1} \otimes_{\cO(\mfh^*)} \pi'_* N =
(\Omega_{\mfh^*}^{-1} \otimes_{\cO(\mfh^*)} \pi'_* N)^W.
\]
More generally, we can consider the functor from weakly
$\bC^\times \times  W$-equivariant right $\caD(\mfh^*)$-modules to (weakly)
$\bC^\times$-equivariant left $\caD(\mfh^*)^W$-modules:
\[
T: \mathcal{F} \mapsto (\Omega_{\mfh^*}^{-1} \otimes_{\cO(\mfh^*)}
\mathcal{F})^W.
\]
For every irreducible representation $\chi$ of $W$, we can form the
strongly $\bC^\times \times W$-equivariant right
$\caD(\mfh^*)$-module, $\mathcal{F}_\chi := \Omega_{\mfh^*} \otimes \chi$
(which under the Riemann--Hilbert correspondence is the trivial local
system $\chi$ on $\mfh^*$ equipped with the $W$-linearization given by
the representation). 
We obtain the formula:
\[
T(\mathcal{F}_\chi) = (\chi \otimes \cO(\mfh^*))^W.
\]

This is a $\bC^\times$-equivariant vector bundle on $\mfh^*/W$.  For
any $\lambda \in \mfh^*_\reg$, we can restrict $T(\mathcal{F}_\chi)$
to the line $\bC \cdot \bar \lambda$ and get a
$\bC^\times$-equivariant vector bundle on the line, i.e., a
finite-dimensional filtered vector space
$T(\mathcal{F}_\chi)|_{\bar \lambda}$.  There is a canonical
isomorphism of vector spaces
\[
T(\mathcal{F}_\chi)|_{\bar \lambda} \iso \mathcal{F}_\chi|_{\lambda},  
\]
obtained from the composition 
\[
(\Omega_{\mfh^*}^{-1} \otimes_{\cO(\mfh^*)} \mathcal{F}_\chi)^W \into
\Omega_{\mfh^*}^{-1} \otimes_{\cO(\mfh^*)} \mathcal{F}_\chi
\mathop{\to}^{\text{vol}_{\mfh^*} \otimes -} \mathcal{F},
\]
by applying the restriction of the source to $\bar \lambda$ and the
target to $\lambda$.  This is an isomorphism because, for every
$W$-equivariant vector bundle $\mathcal{F}$ on $\mfh^*$ and
every $\lambda \in \mfh_\reg^*$, the fiber of $\mathcal{F}^W$ at
$\bar \lambda$ equals the fiber of $\mathcal{F}$ at $\lambda$.

Put together, for every $\lambda \in \mfh^*_\reg$, we obtain a
canonical filtration on the fiber $\mathcal{F}_\chi|_{\lambda}$.

Next, note that every weakly $\bC^\times \times W$-equivariant $\cO_{\mfh^*}$-coherent right $\caD_{\mfh^*}$-module is a direct sum of shifts $\mathcal{F}_\chi(k)$,
where the notation indicates a grading shift:  
$M(k) := M \otimes_\bC \bC_{-k}$, where $k \in \bZ$ and $\bC_k$ is the
representation of $\bC^\times$ in which $\gamma \in \bC^\times$ acts
by $\gamma^k \cdot \Id$. In the strongly equivariant case, $k=0$ for every
summand, i.e., these modules are direct sums of copies of $\mathcal{F}_\chi$.

Returning to $\pi'_* N$, we can write $\pi'_* N$ as a direct sum of
such $\mathcal{F}_\chi$ (with homological and weight shifts).  Let us determine the weight shifts.  Recall from \eqref{e:hkihs} and the preceding that $N$ is a simple holonomic right $\caD_{\mfh^* \times \mfg^*}$-module isomorphic to $f_* \Omega_{\widetilde \mfg^*}$. Equip the latter with the canonical strong 
$\bC^\times$-equivariant structure coming from this structure on 
$\Omega_{\widetilde \mfg^*}$. Then we can see from the proof of \cite[Theorem 4.2]{HKihs} that the isomorphism $N \to f_*\Omega_{\widetilde \mfg^*}$ is $W$-equivariant and sends the generator $[1] \in N$ to an element of degree $2 \dim \mathcal{B}$.  Thus, if we put $[1] \in N$ in degree zero, we obtain that $N \cong f_* \Omega_{\widetilde \mfg^*}(2 \dim \mathcal{B})$ as a weakly $\bC^\times \times W$-equivariant $\caD$-module.  Alternatively (but which amounts to the same proof), we 
observe that, since $N$ is simple, there is only a single value of the
shift, and then it must be $2 \dim \mathcal{B}$ in order to agree with Theorem \ref{t:can-iso}. Pushing forward, all weight shifts of the $\mathcal{F}_\chi$ appearing in $\pi'_* N$ are by $2 \dim \mathcal{B}$.

It follows that the filtration on each cohomology of the fiber at $\lambda$, 
$H^{-i}(\pi'_* N|_{\lambda}) = H^{2\dim \mathcal{B}-i}(\mathcal{B})$,
is a direct sum of copies of a single filtered vector space for each
irreducible representation $\chi$ of $W$, and the associated graded vector space $\gr \mathcal{F}_\chi|_{\lambda}$ of the latter is isomorphic to 
%
$(\chi \otimes 
\cO(\mfh^*))^W|_{0} \otimes \bC_{-2\dim \mathcal{B}}$. 
Recall that, by Chevalley's theorem, $\cO(\mfh^*)$ is a free
$\cO(\mfh^*)^W$-module, and in fact $\cO(\mfh^*) \cong (\cO(\mfh^*) / (\cO(\mfh^*)^W)_+) \otimes \cO(\mfh^*)^W$ as a $\cO(\mfh^*)^W \times W$-module.  
Put together,
 the Hilbert series of $\gr \mathcal{F}_\chi|_{\lambda}$ is
\begin{multline}
h((\chi \otimes \cO_{\mfh^*}(2\dim \cB))^W|_{0}; y)
= y^{-2\dim \cB} h(\Hom_W(\chi, \cO_{\mfh^*}/((\cO_{\mfh^*}^W)_+)); y) \\ = 
h(\Hom_W(\chi \otimes \sigma, \cO_{\mfh^*}/((\cO_{\mfh^*}^W)_+)); y^{-1})
=K_{\mfg,\chi}(y^{-2}),
\end{multline}
where the second equality is due to Poincar\'e duality for $H^*(\mathcal{B}) \cong \cO_{\mfh^*}/((\cO_{\mfh^*}^W)_+)$.


\subsection{The structure of $M(\cN)$}
The following statement was conjectured in \cite[Conjecture
8.1]{PS-pdrhhvnc}. As in \S \ref{s:spr-fiber}, let $V_\chi^*$ be a
weight-graded vector space with Hilbert series
$K_{\mfg,\chi}(y^{-2})$. Since $\rho_* \Omega_{T^*\mathcal{B}}$ is
strongly $\bC^\times$-equivariant and $\IC(\mathcal{O}_\chi,L_\chi)$
is a summand for all $\chi \in \Irrep(W)$, it follows that
$\IC(\mathcal{O}_\chi,L_\chi)$ admits the structure of a strongly
$\bC^\times$-equivariant $\caD$-module.  Let us equip it with this
structure.
\begin{theorem}\label{t:mt}
  There is an isomorphism of weakly equivariant $\caD$-modules,
\[
M(\cN) \cong \bigoplus_{\chi \in \Irrep(W)} V_\chi^* \otimes
\IC(\mathcal{O}_\chi,L_\chi).
\]
\end{theorem}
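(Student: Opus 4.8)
The plan is to reduce Theorem \ref{t:mt} to a statement about the weakly equivariant (i.e.\ weight-graded) structure of the Harish-Chandra $\caD$-module $M(\cN)$, following the strategy of Hotta--Kashiwara. As an ungraded $\caD$-module, $M(\cN)$ is known (from \cite{HKihs}, where it is $\mathcal{M}_0$, together with the isomorphism $\rho_*\Omega_{T^*\cB}\cong\bigoplus_\chi \chi\otimes\IC(\cO_\chi,L_\chi)$ and an argument comparing the two via the Harish-Chandra module on the regular semisimple locus) to be isomorphic to $\bigoplus_\chi (\text{mult. space})\otimes\IC(\cO_\chi,L_\chi)$, where the multiplicity space of each $\chi$ has the same dimension as that of $\chi$ in the regular representation $H^*(\cB)$, namely $\dim V_\chi=K_{\mfg,\chi}(1)$. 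So the content is entirely in identifying the grading: one must show that the multiplicity space of $\IC(\cO_\chi,L_\chi)$ in $M(\cN)$, with the weight grading coming from the (squared) dilation $\bC^\times$-action on $\mfg^*\supseteq\cN$, has Hilbert series $K_{\mfg,\chi}(y^{-2})$, i.e.\ equals $V_\chi^*$.

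\textbf{Step 1: Set up the graded multiplicity spaces.} For each $\chi$, define the weight-graded multiplicity space $U_\chi := \Hom_{\caD\text{-mod}}(\IC(\cO_\chi,L_\chi), M(\cN))$, where the grading is induced by weak equivariance of $M(\cN)$ and the chosen strong equivariant structure on $\IC(\cO_\chi,L_\chi)$ coming from its appearance in $\rho_*\Omega_{T^*\cB}$. By the ungraded classification above, $M(\cN)\cong\bigoplus_\chi U_\chi\otimes\IC(\cO_\chi,L_\chi)$ as weakly equivariant $\caD$-modules, and it remains to compute $h(U_\chi;y)$. The plan is to extract this from the associated graded / de Rham pushforward: pushing forward to a point, $M(\cN)$ computes $\HP_*^{DR}(\cN)$, and $\pi_*\IC(\cO_\chi,L_\chi)$ computes (a shift of) $\IH^*(\overline{\cO_\chi},L_\chi)$, which is well understood. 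Thus the bigraded Hilbert series of $\HP_*^{DR}(\cN)$ decomposes as $\sum_\chi h(U_\chi;y)\cdot\big(\text{bigraded IH Poincar\'e polynomial of }(\overline{\cO_\chi},L_\chi)\big)$.

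\textbf{Step 2: Pin down the grading via a weight filtration / Euler argument.} This is where I expect the main obstacle. The $\bC^\times$-action is only weak, not strong (as the text stresses, the Euler field is not in $\caD(\mfg^*)\cdot(\ad(\mfg)+I(\cN))$), so one cannot simply read off weights from a strong structure; instead one must track how the weight grading interacts with the canonical generator of $M(\cN)$ (the image of $1\in\caD(\mfg^*)$, which sits in weight $0$) and use the Harish-Chandra structure. The approach I would take: restrict to a transverse slice at a generic point of each orbit $\cO_\chi$ — equivalently, work on the Kostant--Slodowy slice $S_\phi$ as in \S\ref{s:spr-fiber} — to reduce the computation of the weight of the $\IC(\cO_\chi,L_\chi)$-isotypic generator to a finite-dimensional (local, regular-holonomic) computation, in which the Kazhdan grading on the W-algebra side meets the cohomological grading on the Springer-fiber side. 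Here one invokes the identification $H^*(\rho^{-1}(S_\phi\cap\cN))\cong\HP^{DR}_{\dim S_\phi\cap\cN-*}(S_\phi\cap\cN)$ from \cite{ESwalg}, which is compatible with the $\pi_0\Stab_G(\phi)$-action and hence refines the Springer correspondence. Combined with the fact that the graded $W$-multiplicity of $\chi$ in $H^*(\cB)$ has Hilbert series $K_{\mfg,\chi}(x^2)$ and with palindromicity/Poincar\'e duality relating $x$ and $y$, this forces $h(U_\chi;y)=K_{\mfg,\chi}(y^{-2})$, up to fixing the overall weight shift.

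\textbf{Step 3: Fix the normalization.} The remaining point is the overall weight shift, i.e.\ showing the lowest weight of $U_\chi$ is $0$ for all $\chi$ (equivalently, that the weight grading on $\HP_*^{DR}(\cN)$ is nonpositive with top piece in weight $0$, as noted in the remark after Theorem \ref{t:lusztig-conj}). I would establish this by locating the canonical weight-zero generator of $M(\cN)$: the trivial representation $\chi=\mathrm{triv}$ corresponds to the open dense orbit $\cO_{\mathrm{reg}}$ with $K_{\mfg,\mathrm{triv}}(t)=1$, so $U_{\mathrm{triv}}$ must be one-dimensional in weight $0$, matching the class of the cyclic generator; and any positive weight would contradict that $M(\cN)$ is generated over $\caD$ by this degree-zero element. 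For the other $\chi$, one checks the top weight of $U_\chi$ is $0$ by a similar cyclic-generation / support argument on $\overline{\cO_\chi}$, and then the palindromicity of $K_{\mfg,\chi}$ together with the known total dimension $\dim U_\chi = K_{\mfg,\chi}(1)$ forces the full graded identity $h(U_\chi;y)=K_{\mfg,\chi}(y^{-2})$. The hard part throughout is controlling the weight grading despite the absence of a strong $\bC^\times$-structure; everything else is either cited from \cite{HKihs,ESwalg} or is a formal consequence of the orbit-by-orbit decomposition.
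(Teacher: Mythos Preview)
Your proposal has a genuine gap: none of the steps actually determines the graded Hilbert series $h(U_\chi;y)$. In Step~2 you push $M(\cN)$ to a point, which gives $\HP_*^{DR}(\cN)$---but its bigraded structure is exactly the unknown (Theorem~\ref{t:lusztig-conj} is derived \emph{from} Theorem~\ref{t:mt}, not the other way around), so decomposing $P_\cN(x,y)$ into $\sum_\chi h(U_\chi;y)\cdot(\text{IH polynomial})$ is an equation with both sides unknown. The slice argument has the same defect: \cite{ESwalg} gives $H^*(\rho^{-1}(S_\phi\cap\cN))\cong\HP^{DR}_{*}(S_\phi\cap\cN)$ as a homologically graded vector space, but says nothing about the Kazhdan weight grading, which is precisely what you are trying to compute. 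In Step~3, ``palindromicity of $K_{\mfg,\chi}$ plus $\dim U_\chi=K_{\mfg,\chi}(1)$ plus top weight $0$'' does not pin down $h(U_\chi;y)$: there are many palindromic polynomials of a given degree and value at $1$. And your argument that positive weights cannot occur because $M(\cN)$ is generated by a degree-zero element is wrong, since $\caD(\mfg^*)$ contains elements of arbitrary weight.

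What the paper actually uses is a structural result you never invoke: Hotta--Kashiwara's theorem that there is a \emph{canonical isomorphism of weakly equivariant $\caD$-modules}
\[
M(\cN)\cong\Hom_W\bigl(H(\mfh^*)\otimes\sigma,\ \rho_*\Omega_{T^*\cB}\bigr),
\]
with $H(\mfh^*)$ the harmonic polynomials. This single input carries the weight grading (via the grading on $H(\mfh^*)$) and immediately yields $U_\chi\cong\Hom_W(\chi\otimes\sigma,H^*(\cB))^*$, whence $h(U_\chi;y)=K_{\mfg,\chi}(y^{-2})$ by Poincar\'e duality. The alternative proof in \S\ref{sec:alt} achieves the same via the Hamiltonian reduction functor $\mathbb{H}$ to $\caD(\mfh^*)\rtimes\bC[W]$-modules, which is faithful on the relevant simples and sends $M(\cN)$ to an explicitly computable module. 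Either route gives a faithful, grading-preserving passage to a setting where the answer is visible; your proposal lacks any such passage.
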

\begin{proof}
 The proof follows from 
 Hotta and Kashiwara's Theorem \ref{t:hk}. We need to observe 
that $\mathrm{Har}(\mfh^*)$ is canonically isomorphic (as
a graded $W$-representation) to $\Sym \mfh / ((\Sym \mfh)_+^W)$,
and thus to $H^*(\cB)$ as a graded $W$-representation.
So we get
\begin{multline}
  \Hom_W(\mathrm{Har}(\mfh^*) \otimes \sigma, \rho_* \Omega_{T^*\cB}) \cong
  \Hom_W(H^{*}(\cB) \otimes \sigma, \rho_* \Omega_{T^*\cB}) \\ \cong
  \bigoplus_{\chi \in \Irrep(W)} \Hom_W(\chi \otimes \sigma,
  H^*(\cB))^* \otimes \Hom_W(\chi, \rho_* \Omega_{T^*\cB}).
\end{multline}
Then, by definition, $\Hom_W(\chi, \rho_* \Omega_{T^*\cB}) \cong
\IC(\cO_{\chi},L_{\chi})$. By Poincar\'e duality, $\Hom_W(\chi \otimes
\sigma, H^*(\cB)) \cong \Hom_W(\chi, H^{2\dim \cB-*}(\cB))$, and the
Hilbert series of the latter is $K_{\mfg,\chi}(t^2)$, so its dual has
Hilbert series $K_{\mfg,\chi}(y^{-2})$. Thus, the RHS is isomorphic to
$\bigoplus_\chi V_\chi^* \otimes \IC(\cO_{\chi},L_\chi)$, as desired.
\end{proof}

\subsection{Proof of Lusztig's formula}\label{ss:proof-lusztig-conj}
Pushing $\rho_* \Omega_{T^*\cB} \cong \bigoplus_{\chi \in \Irrep(W)}
\chi \otimes \IC(\cO_{\chi},L_\chi)$ to a point, we get
\begin{equation}\label{e:ih-fla}
  x^{-\dim \cO_\chi}h(\IH^{*}(\cO_\chi,L_\chi);x) = x^{-2\dim \cB}h(\Hom_W(\chi,H^{*}(T^*\cB));x)= K_{\mfg,\chi}(x^{-2}).
\end{equation}
Thus, pushing the formula of Theorem \ref{t:mt} to a point, we obtain 
Lusztig's formula \eqref{e:lusztig}.

\section{Proofs of results on $W$-algebras and Slodowy slices}
\subsection{Proof of
 Corollaries
  \ref{c:hilb-hp0} and \ref{c:hilb-hp0-2}}
The first assertion of Corollary \ref{c:hilb-hp0} follows by
\cite[Remark 8.7]{PS-pdrhhvnc} (in more detail, it is a consequence of
Theorem \ref{t:mt} and \cite[Theorem 5.1]{PS-pdrhhvnc}). The assertion
for $W_\phi^\eta$ then follows immediately from \cite[Theorem
1.10.(ii)]{ESwalg}, stating that $\HP_0(W_\phi^\eta)$ is flat in
$\eta$. Hence, as explained in \cite[Theorem 1.10.(iii)]{ESwalg} and
its proof, $\HP_0(W_\phi)$ is a free graded module over
$\cO(\mfg)^\mfg$, so the assertion follows for $\HP_0(W_\phi)$ as
well. The same argument implies the refined statement, Corollary
\ref{c:hilb-hp0-2}.
\subsection{Proof of Corollary \ref{c:mc1}}
For Corollary \ref{c:mc1}, we need to compute $M(S_\phi \cap \cN)$ as
a weakly $\bC^\times$-equivariant $\caD$-module, with respect to its
dilation action with fixed point $\phi$.  We will use the notation
$N(k)$ for the shift of the weak $\bC^\times$-equivariant structure
on $N$ as defined in Section \ref{s:proof-cor}.
\begin{proposition}
  As weakly $\bC^\times$-equivariant $\caD$-modules,
\[
M(S_\phi \cap \cN)(\dim \cO_\phi) \cong \bigoplus_\chi V_\chi^* \otimes
\IC(S_{\chi,\phi},L_\chi|_{S_{\phi} \cap \cO_{\chi}}),
\]
the sum taken over $\chi \in \Irrep(W)$ such that $\overline{\cO_\chi}
\supseteq \cO_\phi$.
\end{proposition}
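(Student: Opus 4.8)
The plan is to deduce the statement from Theorem \ref{t:mt} by passing to the Slodowy slice $S_\phi$, using the fact that the slice construction is compatible with all the $\caD$-module and local-system data involved. First I would recall that intersecting with $S_\phi$ (or rather applying the appropriate Kashiwara-type pullback functor along the transverse slice $S_\phi \into \mfg^*$) is an exact functor that takes $\IC(\cO_\chi, L_\chi)$ to $\IC(S_{\chi,\phi}, L_\chi|_{S_\phi \cap \cO_\chi})$ whenever $\overline{\cO_\chi} \supseteq \cO_\phi$, and to zero otherwise (since then $\overline{\cO_\chi}$ is disjoint from $\phi$ and hence from a neighborhood of $\phi$ in the slice); this is the standard transversality statement for $\IC$ sheaves along a Slodowy slice, which holds here because the slice meets each $\cO_\chi$ transversally. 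Second, I would verify that this same functor carries $M(\cN)$ to $M(S_\phi \cap \cN)$: this is because $S_\phi \cap \cN$ is a Poisson subvariety obtained by Hamiltonian reduction, and $M$ of a variety is compatible with transverse slices to symplectic leaves — concretely, one can use the description of $M(\cN)$ via Hamiltonian vector fields from Remark \ref{r:my-defn} together with the local normal form $\cN \cong (S_\phi \cap \cN) \times (\text{symplectic})$ near $\cO_\phi$, under which $M$ of a product is the external tensor product and $M$ of a symplectic manifold is its $\caD$-module of volume forms.

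Third, I would track the $\bC^\times$-equivariant structures and the grading shifts. The dilation action on $S_\phi \cap \cN$ is the Kazhdan action with fixed point $\phi$, which differs from the restriction of the naive dilation on $\cN$; the discrepancy is recorded by the $h$-weight, and comparing the two produces exactly the shift by $\dim \cO_\phi$ appearing in the statement (the slice to $\cO_\phi$ picks up a weight shift equal to the number of positive $\ad(h)$-eigenvalues on $T_\phi \cO_\phi$, which is $\tfrac12 \dim \cO_\phi$ after squaring, i.e. $\dim \cO_\phi$ in the conventions used here). Applying the slice functor to both sides of Theorem \ref{t:mt} and matching equivariant structures then yields
\[
M(S_\phi \cap \cN)(\dim \cO_\phi) \cong \bigoplus_{\chi \mid \overline{\cO_\chi} \supseteq \cO_\phi} V_\chi^* \otimes \IC(S_{\chi,\phi}, L_\chi|_{S_\phi \cap \cO_\chi}),
\]
since the $V_\chi^*$ factors are inert under the slice functor (they carry no $\caD$-module structure).

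The main obstacle I expect is the careful bookkeeping in the second and third steps: establishing rigorously that the slice functor intertwines $M(-)$ of the ambient cone with $M(-)$ of the sliced cone, and pinning down the precise grading shift $\dim \cO_\phi$ (as opposed to $\tfrac12 \dim \cO_\phi$ or $\dim \cO_\phi \pm$ a constant). Both points are essentially contained in the Hotta–Kashiwara framework and in the treatment of Kazhdan gradings in \cite{ESwalg}, so I would cite those for the normalization conventions and for the compatibility of $M(\cN)$ with restriction to $S_\phi$, and then the identification of the summands is immediate from transversality of the Slodowy slice to the nilpotent orbit stratification.
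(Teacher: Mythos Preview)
Your outline is essentially the paper's argument: both pass to a local product decomposition near $\cO_\phi$ (Darboux--Weinstein), use it to identify $M(\cN)$ with $M(S_\phi\cap\cN)\boxtimes\Omega$ on completions, invoke Theorem~\ref{t:mt}, and then track the grading shift.

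The one place where the paper proceeds differently, and more cleanly, is your step~3. You propose to compare the Kazhdan action on $S_\phi\cap\cN$ directly with the squared dilation on $\cN$ and to read off the shift $\dim\cO_\phi$ from the $\ad(h)$-weights on $T_\phi\cO_\phi$. The paper avoids this comparison entirely by appealing to \cite[Theorem~5.1]{PS-pdrhhvnc}, which gives an \emph{intrinsic} description of the weakly equivariant local system sitting on each symplectic leaf $S$ of $M(Y)$ for any conical Poisson $Y$: after a shift by $-\dim S$, it is the canonical local system whose fiber at $x\in S$ is the graded vector space $\HP_0(\cO(S_x))$ for a transverse slice $S_x$. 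Since the transverse slice to $S_\phi\cap\cO_\chi$ inside $S_\phi\cap\cN$ coincides with the slice to $\cO_\chi$ inside $\cN$, the two local systems match after the shift $\dim(S_\phi\cap\cO_\chi)-\dim\cO_\chi=-\dim\cO_\phi$, uniformly in $\chi$. This pins down the shift without ever relating the two $\bC^\times$-actions. Your direct approach can be made to work (the two actions differ by a one-parameter subgroup of $G$, and everything in sight is strongly $G$-equivariant), but the mechanism you cite for the shift---counting positive $\ad(h)$-eigenvalues on $T_\phi\cO_\phi$---is not quite the right one; the shift really comes from the dimension normalization in identifying $H^0 i_S^* M(Y)$ with the leafwise local system, and the paper's route via \cite{PS-pdrhhvnc} makes this transparent.
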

\begin{proof}
  Completing at $\phi$, we have, by the Darboux--Weinstein
  decomposition theorem, as formal Poisson schemes, $\hat \cN \cong
  \widehat{S_\phi \cap \cN} \times \hat \cO_{\phi}$.  But $\cO_{\phi}$
  is smooth so $\hat \cO_{\phi}$ is the completion of a symplectic
  vector space at the origin.  So, disregarding the equivariant
  structure, $M(\cN)|_{\hat \cN} \cong M(S_\phi \cap
  \cN)|_{\widehat{S_\phi \cap \cN}} \boxtimes \Omega_{\cO_\phi}|_{\hat
    \cO_{\phi}}$.  As a result, $M(S_\phi \cap \cN)$ is a direct sum
  of intermediate extensions of local systems on its leaves, which are
  $S_\phi \cap \cO_\chi$; the local systems which appear are
  $K_{S_\phi \cap \cO_\chi} := H^0 i_{S_\phi \cap \cO_\chi}^* M(S_\phi
  \cap \cN)$ where $i_{S_\phi \cap \cO_\chi}: S_\phi \cap \cO_\chi \to
  (S_\phi \cap \cN)$ is the inclusion.  This is even true together
  with the equivariant structure.  By \cite[Theorem 5.1]{PS-pdrhhvnc},
  the shift $K_{S_\phi \cap \cO_\chi}(-\dim (S_\phi \cap \cO_\chi))$ is
  the weakly equivariant local system
  described in \cite[\S 4.3]{ESdm} canonically given by
  attaching, to each $x \in S_\phi \cap \cO_\chi$, the fiber
  $\HP_0(\cO(S_x))$ where $S_x$ is the slice to $S_\phi \cap \cO_\chi$
  in $S_{\phi} \cap \cN$ at $x$.
  Precisely, the summand of $K_{S_\phi \cap
    \cO_\chi}$ which is weakly equivariant with respect to the
  character $m-\dim (S_\phi \cap \cO_\chi)$ of $\bC^\times$ is the
  local system attaching to each $x$ the weight $m$ subspace of
  $\HP_0(\cO(S_x))$.

  Note that $S_x$ is isomorphic to the same slice to $\cO_{\chi}$ in
  $\cN$ at $x$, so this is compatible with our previous notation.
  Passing back to $\cN$, we know again that $M(\cN)$ is a direct sum
  of intermediate extensions of local systems $L_{\cO_\psi}$ on its
  leaves $\cO_\psi$.  Again applying \cite[Theorem 5.1]{PS-pdrhhvnc},
  $K_{\cO_\psi}(-\dim \cO_{\psi})$ is again the canonical weakly
  equivariant local system attaching $\HP_0(\cO(S_x))$ to each point
  $x \in \cO_{\psi}$.

  By Theorem \ref{t:mt}, $K_{\cO_\psi} = \bigoplus_{\cO_\chi=\cO_\psi}
  V^*_\chi \otimes L_\chi$. Applying the two paragraphs above, we
  conclude that $K_{S_\phi \cap \cO_\chi} = \bigoplus_{\cO_\tau =
    \cO_\chi} V^*_\tau[\dim(S_\phi \cap \cO_\chi)-\dim \cO_\chi]
  \otimes L_\tau|_{S_\phi \cap \cO_{\chi}}$.  But $\dim (S_\phi \cap
  \cO_\chi)- \dim \cO_{\chi} = -\dim \cO_\phi$.  Summing over all
  $\chi$ yields the proposition.
\end{proof}
Now, pushing forward $M(S_\phi \cap \cN)$ to a point, we obtain
Corollary \ref{c:mc1}.

\section{Hamiltonian reduction and a mirabolic generalization}

\subsection{An alternative proof of Theorem \ref{t:mt}}\label{sec:alt}

We sketch an alternative way to complete the proof of Theorem
\ref{t:mt}, using the functor of Hamiltonian reduction. The details,
which are easily checked, are left to the interested reader. 
Let $G$ be  a connected complex Lie or algebraic group
with $\mfg = \Lie G$.
The Harish-Chandra homomorphism is a
surjective morphism $\delta: \caD (\mfg^*)^G \rightarrow
\caD(\mfh^*)^W$. This descends to an isomorphism $(\ad(\mfg)
\caD(\mfg^*))^G \setminus \caD (\mfg^*)^G \stackrel{\sim}{\longrightarrow}
\caD(\mfh^*)^W$. This allows one to define the functor of Hamiltonian
reduction $\widetilde{\mathbb{H}} : \mathrm{mod}$-$(\caD(\mfg^*),G) \rightarrow
\mathrm{mod}$-$\caD(\mfh^*)^W$, $\widetilde{\mathbb{H}}(M) = M^G$, from the category
of finitely generated, strongly $G$-equivariant right $\caD(\mfg^*)$-modules to the category
of finitely generated right $\caD(\mfh^*)^W$-modules.

We are interested in weakly $\bC^\times$-equivariant modules for the
squares of the dilation actions on $\mfg^*$ and $\mfh^*$.  Precisely, in
the former case we consider weakly $\bC^\times$-equivariant, strongly
$G$-equivariant right $\caD$-modules on $\mfg^*$, and in the latter case we
consider graded right $\caD(\mfh^*)^W$-modules.
For brevity we will call these weakly
equivariant modules on $\mfg^*$ or $\mfh^*$.  Let $\Eu_{\mfg^*}$ and
$\Eu_{\mfh^*}$ denote the Euler vector fields on $\mfg^*$ and $\mfh^*$, so
that the square of the dilation action in question is generated by
twice the Euler vector field.


One calculates that $\delta(\Eu_{\mfg^*}) = \Eu_{\mfh^*} - N$ where $N =
|R^+|$ and $R^+$ a choice of positive roots for $W$. Therefore, the
functor $\widetilde{\mathbb{H}}$ induces a functor between weakly
$\bC^\times$-equivariant modules such that $\widetilde{\mathbb{H}}(M)(2N)$ is a
strongly $\bC^\times$-equivariant $\caD(\mfh^*)^W$-module if $M$ is a
strongly $\bC^\times$-equivariant $\caD(\mfg^*)$-module, where as before $M(k) :=
M \otimes_\bC \bC_{-k}$.

We have
\begin{equation}\label{eq:iso1}
  \widetilde{\mathbb{H}}(M(\cN)) =  \cO(\mfh^*)^W_+  \caD(\mfh^*)^W \setminus \caD(\mfh^*)^W . 
\end{equation}
The rings $\caD(\mfh^*)^W$ and $\caD(\mfh^*) \rtimes \bC[W]$ are Morita
equivalent, compatibly with weak equivariance. Let
$\mathbb{H}$ be the composite functor $
\mathrm{mod}$-$(\caD(\mfg^*),G) \rightarrow \mathrm{mod}$-$\caD(\mfh^*)
\rtimes \bC [W]$. Then
$$
\mathbb{H}(M(\cN)) \simeq \cO(\mfh^*)^W_+ \setminus
\cO(\mfh^*) \otimes_{\cO(\mfh^*) \rtimes \bC[W]} \caD(\mfh^*) \rtimes \bC[W]
.
$$
The $\caD(\mfh^*) \rtimes \bC[W]$-modules $\Delta(\chi) := \chi
\otimes_{\cO(\mfh^*) \rtimes \bC[W]}(\caD(\mfh^*) \rtimes \bC[W])$,
for $\chi \in \mathrm{Irr} (W)$, are simple and
$\bC^\times$-equivariant, where $\cO(\mfh^*)_+$ acts
by zero on $\chi$ and
the group $\mathbf{C}^{\times}$ acts trivially on $\chi$. It has been shown in
\cite{HKihs} and \cite{LSssi}, respectively, that $M(\mathcal{N})$ is
semisimple and that $\mathbb{H}(\IC(\mathcal{O}_\chi,L_\chi)) \neq 0$
for all simple summands $\IC(\mathcal{O}_\chi,L_\chi)$ of
$M(\mathcal{N})$.\footnote{In \cite{LSssi} both statements are proved
  without explicitly using the proof of \cite{HKihs} of the first
  statement.} Since $\mathbb{H}$ is a quotient functor,
$\mathbb{H}(\IC(\mathcal{O}_\chi,L_\chi))$ is a simple $\caD(\mfh^*)
\rtimes \bC [W]$-module. Therefore
$\mathbb{H}(\IC(\mathcal{O}_\chi,L_\chi))(2N) \simeq \Delta(\lambda)$
for some $\lambda$. The key to proving Theorem \ref{t:mt} is to
identify $\lambda$. Recall that $\sigma$ denotes the sign
representation.

\begin{proposition}\label{prop:bij}
  As strongly $\bC^\times$-equivariant $\caD(\mfh^*) \rtimes
  \bC[W]$-modules,
  $\mathbb{H}(\IC(\mathcal{O}_\chi,L_\chi))(2N) \simeq
  \Delta(\chi \otimes \sigma)$.
\end{proposition}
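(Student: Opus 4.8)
The plan is to identify the parameter $\lambda$ by computing the graded character of $\mathbb{H}(\IC(\cO_\chi, L_\chi))$ and matching it against that of $\Delta(\chi \otimes \sigma)$, using the already-established decomposition of $M(\cN)$. Recall from the previous subsection (the Hotta–Kashiwara computation) that we have the isomorphism of weakly equivariant $\caD$-modules $M(\cN) \cong \bigoplus_\chi V_\chi^* \otimes \IC(\cO_\chi, L_\chi)$, where $h(V_\chi^*; y) = K_{\mfg,\chi}(y^{-2})$. Applying the exact functor $\mathbb{H}$ (it is exact, being a composite of Hamiltonian reduction, which is a quotient functor, with a Morita equivalence) and using \eqref{eq:iso1} together with the Morita equivalence, we get
\[
\cO(\mfh^*)^W_+ \setminus \cO(\mfh^*) \otimes_{\cO(\mfh^*) \rtimes \bC[W]} \caD(\mfh^*) \rtimes \bC[W] \;\cong\; \bigoplus_{\chi} V_\chi^* \otimes \mathbb{H}(\IC(\cO_\chi, L_\chi)).
\]
The left-hand side, as a $\caD(\mfh^*) \rtimes \bC[W]$-module, is built from the coinvariant algebra $\cO(\mfh^*)/\cO(\mfh^*)^W_+ \cong H^*(\cB)$ (as a graded $W$-representation, $\mfh^*$ in degree $2$), so it is $\bigoplus_\chi \bigl(\Hom_W(\chi, \cO(\mfh^*)/\cO(\mfh^*)^W_+) \otimes \Delta(\chi)\bigr)$, with the multiplicity space having Hilbert series $K_{\mfg,\chi}(y^2)$.

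Next I would compare the two sides summand by summand. Each $\mathbb{H}(\IC(\cO_\chi,L_\chi))(2N)$ is a simple equivariant $\caD(\mfh^*) \rtimes \bC[W]$-module, hence of the form $\Delta(\mu_\chi)$ for a unique $\mu_\chi \in \Irrep(W)$, by the classification of simple equivariant modules over $\caD(\mfh^*) \rtimes \bC[W]$ (these are exactly the $\Delta(\mu)$, each appearing with a one-dimensional $\bC^\times$-equivariant structure on $\mu$ up to shift). From \cite{HKihs} and \cite{LSssi} we know $\chi \mapsto \mu_\chi$ is well-defined and that $\mathbb{H}(\IC(\cO_\chi,L_\chi)) \neq 0$; combined with the decomposition above, the assignment $\chi \mapsto \mu_\chi$ must be a \emph{bijection} on $\Irrep(W)$ (since both the $V_\chi^*$ and the $\Delta(\chi)$ run over all of $\Irrep(W)$, and the multiplicity of any given $\Delta(\mu)$ on each side is a single term). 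So it remains to pin down this bijection.

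To identify $\mu_\chi = \chi \otimes \sigma$, I would extract the grading. On the one hand, matching the $\Delta(\chi\otimes\sigma)$-isotypic part: the coinvariant-algebra side contributes $\Hom_W(\chi\otimes\sigma, \cO(\mfh^*)/\cO(\mfh^*)^W_+) \otimes \Delta(\chi\otimes\sigma)$, whose multiplicity space has Hilbert series $K_{\mfg,\chi\otimes\sigma}(y^2)$, shifted appropriately by $2N$. On the other hand, the $\IC$-side contributes $V_{\chi'}^* \otimes \Delta(\mu_{\chi'})$ for whichever $\chi'$ has $\mu_{\chi'} = \chi\otimes\sigma$; its multiplicity space has Hilbert series $K_{\mfg,\chi'}(y^{-2})$. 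Using Poincaré duality $\Hom_W(\chi\otimes\sigma, H^*(\cB)) \cong \Hom_W(\chi, H^{2\dim\cB - *}(\cB))$, one gets $K_{\mfg,\chi\otimes\sigma}(y^2) = y^{4\dim\cB}K_{\mfg,\chi}(y^{-2})$ up to the known shift, which forces $\chi' = \chi$ — i.e.\ $\mu_\chi = \chi\otimes\sigma$ — once the overall shift is checked to be compatible. The main obstacle is precisely the bookkeeping of the two grading shifts: the $2N$ twist coming from $\delta(\Eu_{\mfg^*}) = \Eu_{\mfh^*} - N$, and the $[-\dim\mfg]$ twist (equivalently, the $2\dim\cB$-type shift from Poincaré duality, using $\dim\mfg = \dim\mfh + 2\dim\cB$ and $N = \dim\cB$). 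I would handle this by carefully tracking the $\bC^\times$-weight on a single explicit element — for instance the image of $1 \in \cO(\mfh^*)/\cO(\mfh^*)^W_+$, which generates the $\Delta(\mathrm{triv})$ summand and matches the lowest-weight vector of $V_{\chi_0}^*$ for the $\chi_0$ with $\cO_{\chi_0}$ the regular orbit — to fix the normalization, after which the general case follows from $W$-equivariance and the already-known Hilbert series $h(V_\chi^*;y) = K_{\mfg,\chi}(y^{-2})$.
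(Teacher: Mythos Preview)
Your argument has a genuine gap. You propose to pin down the bijection $\chi \mapsto \mu_\chi$ by matching the graded multiplicity spaces: $V_\chi^*$ (with Hilbert series $K_{\mfg,\chi}(y^{-2})$) on one side against $\Hom_W(\mu_\chi, \cO(\mfh^*)/\cO(\mfh^*)^W_+)$ (with Hilbert series $K_{\mfg,\mu_\chi}(y^2)$) on the other, using the Poincar\'e duality identity $K_{\mfg,\chi\otimes\sigma}(t)=t^{N}K_{\mfg,\chi}(t^{-1})$ to force $\mu_\chi=\chi\otimes\sigma$. But this only works if the polynomials $K_{\mfg,\chi}$ are pairwise distinct as $\chi$ ranges over $\Irrep(W)$, and outside type $A$ they are not. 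Already for $\mfg$ of type $G_2$ (so $W\cong I_2(6)$), the two one-dimensional representations other than the trivial and sign representations both have fake degree $t^3$; your numerical matching cannot tell whether $\mu$ sends each of these to itself tensored with $\sigma$ or swaps them. Tracking a single explicit element such as the image of $1$ only fixes the normalization for the trivial summand and says nothing about these ambiguous pairs; ``$W$-equivariance and the already-known Hilbert series'' is not enough to resolve this.

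There is also a structural issue: the proposition sits inside Section~\ref{sec:alt}, whose purpose is to give an \emph{alternative} proof of Theorem~\ref{t:mt}. Your argument imports Theorem~\ref{t:mt} from the previous subsection as its first step, which makes the alternative proof circular. The paper's proof avoids both problems by a direct geometric identification: it checks that Hamiltonian reduction commutes with Fourier transform, passes to Grothendieck's simultaneous resolution $\nu:\widetilde{\mfg^*}\to\mfg^*$ where \cite[Theorem~5.2]{HKihs} gives $\IC(\cO_\chi,L_\chi)^F\cong N_{\chi\otimes\sigma}$ as a summand of $\nu_*\Omega_{\widetilde{\mfg^*}}$, and then restricts to the regular semisimple locus. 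Over $\mfh^*_{\mathrm{reg}}/W$ both $\mathbb{H}(N_\tau)$ and $\Delta^F(\tau)$ become regular connections whose associated local systems are visibly the pullback of $\tau$ along $\pi_1(\mfh^*_{\mathrm{reg}}/W)\twoheadrightarrow W$, which identifies them as $W$-representations rather than merely as graded vector spaces.
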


\begin{proof}
  One can check that the functor of Hamiltonian reduction commutes
  with Fourier transform. Therefore, it suffices to show that
  $\mathbb{H}(\IC(\mathcal{O}_\chi,L_\chi)^F)(2N) \simeq \Delta^F(\chi
  \otimes \sigma)$, where $\Delta^F(\chi \otimes \sigma) := \chi
  \otimes_{\Sym \mfh^* \rtimes \bC[W]}\caD(\mfh^*) \rtimes
  \bC[W]$. Let $\nu : \widetilde{\mfg^*} \rightarrow \mfg^*$ be
  Grothendieck's simultaneous resolution. Then the direct image $\nu_*
  \Omega_{\widetilde{\mfg^*}}$ carries an action of $W$ such that
  $\nu_* \Omega_{\widetilde{\mfg^*}} \simeq \bigoplus_{\chi \in
    \mathrm{Irrep}(W)} \chi \otimes N_{\chi} $ as a
  $(W,\caD(\mfg^*))$-bimodule. By \cite[Theorem 5.2]{HKihs}, $N_{\chi
    \otimes \sigma} \simeq \IC(\mathcal{O}_\chi,L_\chi)^F$. Thus, it
  suffices to show that $\mathbb{H}(N_{\chi}) \simeq \Delta^F(\chi)$
  as $\caD(\mfh^*) \rtimes W$-modules. If $\mfg_{\mathrm{rs}}^*$
  denotes the open subset of $\mfg^*$ of elements whose image under
  the Killing form are regular semisimple in $\mfg$, then $N_{\chi}
  |_{\mfg^*_{\mathrm{rs}}}$ is a $G$-equivariant regular connection on
  $\mfg^*_{\mathrm{rs}}$. Thus, it descends to a regular connection on
  $\mfh_{\mathrm{reg}}^* / W$, whose image under the de Rham functor
  is a local system $R_{\chi}$. By the Springer correspondence,
  $R_{\chi}$ is isomorphic to the pullback of $\chi$ along the
  quotient map $\pi_1 (\mfh_{\mathrm{reg}}^* / W) \twoheadrightarrow
  W$. In more detail, the local system on $\mfh_{\mathrm{reg}}^* / W$
  corresponding to $\nu_* \Omega_{\widetilde{\mfg^*}}$ is the pullback
  of the regular representation under $\pi_1(\mfh_{\mathrm{reg}}^*/W)
  \twoheadrightarrow W$, which is how the action of $W$ is defined on
  $\nu_* \Omega_{\widetilde{\mfg^*}}$ in the first place, and from
  this the statement follows.

  Similarly, both $\mathbb{H}(N_{\chi})$ and $\Delta^F(\chi)$ restrict
  to $W$-equivariant regular connections on
  $\mfh_{\mathrm{reg}}^*$. Therefore, if $e \in \bC [W]$ is the trivial
  idempotent, $e(\mathbb{H}(N_{\chi})
  |_{\mfh_{\mathrm{reg}}^*})$ and $e( \Delta^F(\chi)
  |_{\mfh_{\mathrm{reg}}^*})$ are regular connections on
  $\mfh_{\mathrm{reg}}^* / W$. It is clear that the de Rham functor
  applied to $e( \Delta^F(\chi) |_{\mfh_{\mathrm{reg}}^*})$ gives the
  local system $R_{\chi}$. Thus, it suffices to show that the de Rham
  functor applied to $e(\mathbb{H}(N_{\chi}) |_{\mfh_{\mathrm{reg}}^*})$
  equals $R_{\chi}$.  Equivalently, if $\mathbb{H}^{\perp}$
  is the right adjoint to $\mathbb{H}$, then it suffices
  to show that $\mathbb{H}^{\perp}(\Delta^F(\chi) )
  |_{\mfg^*_{\mathrm{rs}}} \simeq N_{\chi} |_{\mfg^*_{\mathrm{rs}}}$ as
  $G$-equivariant regular connections. This is proved in the same way
  as \cite[Proposition 5.5.1]{MicrolocalKZ}, using all of the preceding.
\end{proof}

The graded multiplicity of $\chi$ in $\cO(\mfh^*)/\cO(\mfh^*)^W_+$ is
given by the graded vector space $V_{\chi}$ with Hilbert series
$K_{\mfg,\chi}(y^2)$ and
\begin{equation}\label{eq:iso2}
  \cO(\mfh^*)^W_+ \setminus \cO(\mfh^*) \otimes_{\cO(\mfh^*) \rtimes \bC[W] } \caD(\mfh^*)\rtimes \bC[W] = \bigoplus_{\chi \in \mathrm{Irrep} (W)}  V_{\chi}  \otimes\Delta(\chi)
\end{equation}
as weakly equivariant modules. Therefore, if $U_{\chi}$ is the graded
multiplicity of $\IC(\mathcal{O}_\chi,L_\chi)$ in $M(\cN)$, equations
(\ref{eq:iso1}) and (\ref{eq:iso2}) imply that $U_{\chi} \simeq
V_{\chi \otimes \sigma}(2N)$. The socle of the Gorenstein ring
$\cO(\mfh^*)/\cO(\mfh^*)^W_+$ is in degree $2N$, where it carries a copy
of the sign representation. Therefore, $\cO(\mfh^*)/\cO(\mfh^*)^W_+$ is,
up to a shift by $-2N$ and twisting by $\sigma$, isomorphic as a
graded $W$-module to its dual. This implies that $V_{\chi \otimes
  \sigma}(2N) \simeq V_{\chi}^*$ as graded vector spaces, and hence
$h(U_{\chi};y^2) = h(V_{\chi}^*;y^2) = K_{\mfg,\chi}(y^{-2})$.

\subsection{The mirabolic case}\label{ss:mirabolic}

In this section only, let $\mfg = \mathfrak{gl}(V)$ for some
$n$-dimensional vector space $V$ and $G = GL(V)$. Then $G$ acts
diagonally on $\mfg^* \times V$.  The group $\mathbb{C}^{\times}$ also
acts by dilations \textit{along the $\mfg^*$ factor}, i.e., $\alpha \cdot
(X, v) = (\alpha^{-2} X, v)$.  Fix $c \in \mathbf{C}$, thought of as a
character $X \mapsto c \mathrm{Tr}(X)$ of $\mfg$. If $\mu : \mfg
\rightarrow \caD$ is the quantum moment map, then a right $\caD$-module $M$
is said to be $(G,c)$-monodromic if $\mu_M(X) + \mu(X) = c \cdot
\Id_M$, where $\mu_M$ is the differential of the $G$ action on $M$ (we
have a sum rather than a difference because we are considering right
$\caD$-modules). Similarly, we say that $M$ is
$(\bC^{\times},d)$-monodromic if $2 \Eu_M + 2 \Eu_{\mfg^*} = d \cdot
\Id_M$, where $\Eu_M$ is the differential of the $\bC^{\times}$ action
on $M$. These notions can be defined sheaf-theoretically; see
\cite[Definition 2.3.2]{mirabolicHam}.  Note that monodromic
$\caD$-modules are automatically weakly equivariant.

The analogue of the $\caD$-module $M(\cN)$ in this case is the
mirabolic Harish-Chandra $\caD$-module (or rather its Fourier
transform), $M_c(\cN \times V)$, as introduced in
\cite{MirabolicCharacter}. As a $(G,c)$-monodromic, weakly equivariant
$\caD$-module on $\mfg \times V$,
$$
M_c(\cN \times V) = (\cO(\mfg^*)_+^G \ \caD + \mu_c(\mfg) \ \caD)
\setminus \caD.
$$
where $\mu_c(\mfg) = \{ \mu(X) - c \mathrm{Tr}(X) \ | \ X \in \mfg
\}$. This $\caD$-module can also be viewed as the natural module
associated to the $c$-twisted action of $\mfg$ on $\cN \times V$, as
in \cite[Remark 2.17]{ES-dmlv}.

The space $\cN \times V$ consists of finitely many $G$-orbits. The
orbits with fundamental group $\mathbf{Z}$ are naturally labeled by
partitions $\lambda \in \mathcal{P}_n$ of $n$. Each of these orbits
$\cO_{\lambda}$ admits a unique irreducible (one-dimensional)
$(G,c)$-monodromic local system $L_{\lambda,c}$. For each $\lambda \in
\mathcal{P}_n$, set $c_{\lambda} := c (n_{\lambda^t} - n_{\lambda})$,
where $\lambda^t$ the dual of $\lambda$; recall that $n_{\lambda} =
\sum_i (i-1) \lambda_i$ is the partition statistic.

We will say that $c$ is \emph{generic} if $c \notin \{\frac{r}{m} \mid 1 \leq m \leq n, \gcd(r,m)=1\}$.

\begin{theorem}\label{thm:mirabolic}
  Assume $c$ is generic.
  There exists a permutation $\tau$ of $\mathcal{P}_n$ such
  that $c_{\tau(\lambda)} = c_{\lambda}$ and an isomorphism of
  $(G,c)$-monodromic, weakly $\mathbf{C}^{\times}$-equivariant
  $\caD$-modules
$$
M_c(\cN \times V) \simeq \bigoplus_{\lambda \vdash n}
V_{\tau(\lambda)}^* \otimes \mathrm{IC}(\cO_{\lambda},L_{\lambda,c}),
$$
where $V_\lambda^*$ is a weight-graded vector space with Hilbert
series $K_{\mfg,\lambda}(y^{-2})$.
\end{theorem}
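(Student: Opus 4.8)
The plan is to run the mirabolic analogue of the alternative proof of Theorem \ref{t:mt} given in \S \ref{sec:alt}, replacing $\caD(\mfg^*)$ by $\caD(\mfg^* \times V)$, the Harish-Chandra homomorphism by the \emph{shifted} (radial parts) homomorphism $\delta_c$ landing in $\caD(\mfh^*)^W$ twisted by $c$, and the decomposition-into-$\IC$-summands input by the semisimplicity of $M_c(\cN \times V)$. The semisimplicity and the list of composition factors $\IC(\cO_\lambda, L_{\lambda,c})$ for generic $c$ is known from the theory of mirabolic $\caD$-modules (\cite{MirabolicCharacter}, \cite{mirabolicHam}); this is exactly the role played in the untwisted case by \cite{HKihs} and \cite{LSssi}, and I would cite it as the starting point. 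First I would set up the Hamiltonian reduction functor $\mathbb{H}_c : \mathrm{mod}\text{-}(\caD(\mfg^* \times V), G, c) \to \mathrm{mod}\text{-}(\caD(\mfh^*) \rtimes \bC[W])$, using that the spherical subalgebra of the relevant rational Cherednik algebra $H_c$ (at the parameter $c$) is the image of $\caD(\mfg^* \times V)^G$ modulo $\mu_c(\mfg)$, as established in the mirabolic setting (the quantum analogue of Gan--Ginzburg). The key computation, as in \S \ref{sec:alt}, is that $\delta_c(\Eu_{\mfg^*}) = \Eu_{\mfh^*} - N'$ for an explicit shift $N'$ (involving $N = |R^+|$ and a $c$-dependent term), so that $\mathbb{H}_c$ carries weakly equivariant modules to graded modules with a controlled degree shift.

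Next I would compute $\mathbb{H}_c(M_c(\cN \times V))$ directly from the presentation $M_c(\cN \times V) = (\cO(\mfg^*)_+^G\,\caD + \mu_c(\mfg)\,\caD)\setminus \caD$: applying $(-)^G$ and using the surjection onto $\caD(\mfh^*)^W$ (twisted by $c$), one gets $\mathbb{H}_c(M_c(\cN \times V)) \cong \cO(\mfh^*)^W_+\,\caD(\mfh^*)^W\setminus \caD(\mfh^*)^W$, hence after Morita equivalence $\cong \bigoplus_\chi V_\chi \otimes \Delta(\chi)$ exactly as in \eqref{eq:iso1}, \eqref{eq:iso2}. (Here the $c$-twist only affects the $\bC^\times$-grading shift, not the underlying $W$-module structure, because conjugating $\caD(\mfh^*)^W$ by a power of the discriminant is a filtered isomorphism; this is where I'd have to be slightly careful bookkeeping the Kazhdan-type degree shift.) On the other side, $\mathbb{H}_c$ is a quotient functor, so $\mathbb{H}_c(\IC(\cO_\lambda, L_{\lambda,c}))$ is either zero or simple; the mirabolic support/microlocal-support results (again \cite{MirabolicCharacter}, \cite{mirabolicHam}, and the mirabolic analogue of \cite{LSssi}) give that it is nonzero for every composition factor of $M_c(\cN\times V)$ when $c$ is generic, hence $\mathbb{H}_c(\IC(\cO_\lambda, L_{\lambda,c}))$ is some $\Delta(\tau(\lambda))$ after a grading shift, for a bijection $\tau : \mathcal{P}_n \to \mathcal{P}_n$. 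Comparing graded multiplicities on the two sides then forces the multiplicity space of $\IC(\cO_\lambda, L_{\lambda,c})$ in $M_c(\cN \times V)$ to be $V_{\tau(\lambda)}^*$ (up to shift), exactly as $U_\chi \simeq V_{\chi\otimes\sigma}(2N)$ was deduced in \S \ref{sec:alt}, and then applying the self-duality (up to shift and $\sigma$-twist) of the coinvariant algebra $\cO(\mfh^*)/\cO(\mfh^*)^W_+$ gives Hilbert series $K_{\mfg,\tau(\lambda)}(y^{-2})$. The constraint $c_{\tau(\lambda)} = c_\lambda$ comes out automatically: $\mathbb{H}_c$ preserves the $(G,c)$-monodromy character, which on the orbit $\cO_\lambda$ is recorded (up to the fixed global shift) by $c_\lambda$, and $\Delta(\chi)$ does not see $c$, so two $\IC$'s reducing to shifts of the same $\Delta$ must have equal monodromy, i.e.\ equal $c_\lambda$.

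The part requiring the Fourier transform, as in Proposition \ref{prop:bij}, is identifying \emph{which} $\Delta(\chi)$ one gets, i.e.\ pinning down $\tau$ explicitly (in the untwisted case $\tau(\chi) = \chi \otimes \sigma$ via Grothendieck's simultaneous resolution and the Springer correspondence on $\mfh^*_{\mathrm{reg}}/W$). In the mirabolic case the analogous statement would use the mirabolic/Cherednik version of the simultaneous resolution (the Calogero--Moser or $\mathrm{Hilb}$ picture for $\mathfrak{gl}_n$) and the known generic parametrization of simple mirabolic modules by partitions via their restriction to the locus where the $\mfg^*$-part is regular semisimple. However, for the statement of Theorem \ref{thm:mirabolic} as written we do not actually need the explicit form of $\tau$ — only that it is a permutation with $c_{\tau(\lambda)} = c_\lambda$ — so I would present the argument above, which yields $\tau$ abstractly from the quotient-functor/multiplicity comparison, and remark that an explicit description of $\tau$ (presumably $\lambda \mapsto \lambda^t$ on the relevant blocks, matching the untwisted $\sigma$-twist) can be extracted by the Fourier-transform argument if desired.

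I expect the main obstacle to be the input on the structure of $M_c(\cN \times V)$: one needs that for generic $c$ it is semisimple with composition factors exactly $\{\IC(\cO_\lambda, L_{\lambda,c})\}_{\lambda \vdash n}$, each with nonzero Hamiltonian reduction. Semisimplicity for generic $c$ should follow because $H_c$ is then semisimple (the Cherednik algebra at a generic parameter), so $\mathbb{H}_c^{-1}$ of a semisimple module is semisimple modulo killing the kernel of $\mathbb{H}_c$ — but one must rule out extra composition factors annihilated by $\mathbb{H}_c$, which is precisely the nonvanishing statement and is the mirabolic analogue of \cite{LSssi}. If this is not already in the literature in exactly the needed form, establishing it (via a restriction-to-$\mfg^*_{\mathrm{rs}}$ argument plus a count of $G$-orbits with fundamental group $\bZ$) is the real content; everything else is a faithful transcription of \S \ref{sec:alt}.
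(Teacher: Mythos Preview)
Your strategy matches the paper's: both run the mirabolic analogue of \S\ref{sec:alt} via Hamiltonian reduction, accept an unidentified permutation $\tau$ in lieu of a mirabolic Proposition~\ref{prop:bij}, and read off multiplicity spaces from the coinvariant decomposition. The paper itself only sketches this and defers details to future work, so at the level of outlines you are aligned.

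Two points need correction. First, the target of $\mathbb{H}_c$ is not $\mathrm{mod}\text{-}(\caD(\mfh^*)\rtimes\bC[W])$ but category $\cO_{-c}$ for the rational Cherednik algebra $H_c(S_n)$; your $\Delta(\chi)$ should be the Cherednik standard modules, and the claim that the $c$-twist is a conjugation of $\caD(\mfh^*)^W$ by a power of the discriminant is not correct (the Dunkl deformation is not inner). You already invoke $H_c$ for the spherical subalgebra, so this is an inconsistency in your setup rather than a missing idea. Second, and more to the point, for generic $c$ the paper uses that $\mathbb{H}_c$ is an \emph{equivalence} $\mathscr{C}_c\simeq\cO_{-c}$, not merely a quotient functor. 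This immediately gives semisimplicity of $M_c(\cN\times V)$ and rules out composition factors annihilated by $\mathbb{H}_c$, so the obstacle you flag at the end (a mirabolic analogue of \cite{LSssi}) does not arise. Finally, the constraint $c_{\tau(\lambda)}=c_\lambda$ comes from the $(\bC^\times,c_\lambda)$-\emph{monodromicity} of $\IC(\cO_\lambda,L_{\lambda,c})$, i.e.\ a $\bC^\times$-datum, not from the $(G,c)$-monodromy as you wrote.
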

  
The basic idea behind the proof of this theorem is essentially the
same as the one outlined in section \ref{sec:alt}. Again, there is a
functor of Hamiltonian reduction,
$\mathbb{H}_c : \mathscr{C}_c \rightarrow \cO_{-c}$, where
$\mathscr{C}_c$ is the category of $(G,c)$-monodromic $\caD$-modules
supported on $\cN \times V$ and $ \cO_{c}$ denotes category $\cO$ for
the rational Cherednik algebra $H_c(S_n)$; see
\cite{AlmostCommutingVariety} for details. In the case where $c$ is
generic, category $\cO_c$ for the rational Cherednik algebra is
semisimple and the functor of Hamiltonian reduction induces an
equivalence between $\mathscr{C}_c$ and category $\cO_{-c}$ (see
\cite[Proposition 9.13]{BB-enhnil-quiver}). The argument of section
\ref{sec:alt} is applicable in this setting, though it is more
involved since the $\caD$-modules
$\mathrm{IC}(\cO_{\lambda},L_{\lambda,c})$ are
$(\mathbf{C}^{\times},c_{\lambda})$-monodromic, unlike the classical
setting where they can be endowed with a
$\mathbf{C}^{\times}$-equivariant structure. Moreover, the reason for
the occurrence of the permutation $\tau$ is the fact that the analogue
of Proposition \ref{prop:bij} is missing in this context. This is
because the key to the proof of Proposition \ref{prop:bij} is the
geometric construction of the simple modules $N_{\chi}$. Since $c$ is
assumed to be generic, there is no analogous construction for the
corresponding simple mirabolic modules.  It is an interesting question
if $\tau$ is the identity, and if not, it would be interesting to
compute it (there would not seem to be an obvious nontrivial
permutation satisfying $c_{\tau(\lambda)}=c_\lambda$).

As shown in \cite{mirabolicHam}, the case where $c$ is not generic is
much more interesting (in particular, there the category of mirabolic
sheaves need not be semisimple, and we expect $M_c(\cN \times V)$ not
to be semisimple when the category is not). We will return to this in
future work, where details of the proof of Theorem \ref{thm:mirabolic}
will also be given.

\bibliographystyle{amsalpha}
\bibliography{master}
\end{document}